\newtheorem{thm}{Theorem}[section]
\newtheorem*{thma}{Theorem A}
\newtheorem*{cora}{Corollary A}
\newtheorem*{corb}{Corollary B}
\newtheorem*{cord}{Corollary D}
\newtheorem*{thmb}{Theorem B}
\newtheorem*{thmc}{Theorem C}
\newtheorem*{thmd}{Proposition D}
\newtheorem*{rema}{Remark}
\newtheorem{cor}[thm]{Corollary}
\newtheorem{lem}[thm]{Lemma}
\newtheorem{prop}[thm]{Proposition}
\newtheorem{defn}[thm]{Definition}
\numberwithin{equation}{section}
\def\pn{\par\noindent}
\begin{document}
\begin{tabular}{c r}
\vspace{-0.6cm}

\end{tabular}
\hskip 1.5 cm
\begin{tabular}{l}

\end{tabular}
\hskip 2 cm
\begin{tabular}{c c}
\vspace{-0.2cm}
\vspace{-1.2cm}
\end{tabular}
\vspace{1.3 cm}


\title{Adjoint groups of $p$-nil rings and $p$-group automorphisms}
\author{Yassine Guerboussa$^*$ and Bounabi Daoud}

\thanks{{\scriptsize
\hskip -0.4 true cm 
\newline Keywords: central automorphisms, radical rings, adjoint groups, finite $p$-groups\\
$*$Corresponding author}}
\maketitle

\begin{abstract} 
We introduce a class of rings, namely the class of  left or right $p$-nil rings, for which the adjoint groups behave regularly.  Every $p$-ring is close to being left or right $p$-nil in the sense that it contains a large ideal belonging to this class. Also their adjoint groups occur naturally as groups of automorphisms of $p$-groups.   These facts and some of their applications are investigated in this paper.    
\end{abstract}

\vskip 0.2 true cm


\pagestyle{myheadings}
\markboth{\rightline {\scriptsize  Y. Guerboussa and B. Daoud}}
         {\leftline{\scriptsize Adjoint groups of $p$-nil rings and $p$-group automorphisms }}

\bigskip
\bigskip


\section{\bf Introduction}
\vskip 0.4 true cm
 Ring theory arises in a natural way in studying the automorphism groups of abelian groups.  Indeed, the automorphism group of an abelian group $G$ is the group of units of the ring of endomorphisms of $G$.  This fact was used by Shoda \cite{Sho} in investigating the structure of such automorphism groups.  While the preceding fact is no longer true if $G$ is not abelian, H. Laue (see \cite{Lau}) observed that there is a general analogue which works for some subgroups of $Aut(G)$.\\
If $N$ is a subgroup of $G$, then  we denote by $End_N(G)$ the set of all endomorphisms $u$ of $G$ such that $x^{-1} u(x) \in N$, for all $x \in G$, and $Aut_N(G)$ denotes the set of automorphisms of $G$ that lie in $End_N(G)$.  Clearly $End_N(G)$ is a monoid under the usual composition of maps. \\
If $N$ is normal and abelian, then it can be viewed as a $G$-module via conjugation $n^x=x^{-1}nx$, $x \in G \mbox{ and } n \in N$.  A derivation of $G$ into $N$ is a mapping $\delta:G\rightarrow N$ such that $\delta(xy)=\delta(x)^y \delta(y)$.  The set $Der(G,N)$ of these derivations is a ring under the addition $ \delta_1+\delta_2(x)= \delta_1(x)\delta_2(x)$ and the multiplication  $ \delta_1 \delta_2(x)= \delta_2(\delta_1(x))$, with  $ \delta_1,\delta_2 \in Der(G,N) \mbox { and }x \in G$.   Now, every endomorphism $u \in  End_N(G)$ defines a derivation $\delta_u(x)=x^{-1} u(x)$ of $G$ into $N$.  And conversely, to each derivation $\delta \in Der(G,N)  $ we can associate an endomorphism $u \in End_N(G) $, with $u(x)=x\delta(x)$.
 
 On the other hand, for any (associative) ring $R$, one can define the circle operation $x\circ y=x+y+xy$.  The set of all elements of $R$ forms a monoid with identity element $0 \in R$ under this operation.   This monoid is called the adjoint monoid of the ring $R$, and the adjoint group $ R^\circ$ of $R$ is the group of invertible elements in this monoid.

\begin{prop}(see \cite{Lau}, Lemma 3.1)
Under the above notation, the mapping $u\mapsto \delta_u  $ is an isomorphism between the monoid $End_N(G)$ and the adjoint monoid of the ring $Der(G,N)$.  In particular it induces an isomorphism between the corresponding groups of invertible elements.
\end{prop}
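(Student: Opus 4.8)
The plan is to write down the inverse map explicitly and then check, in turn, that both maps are well defined, that they are mutually inverse, and that the one in the statement is multiplicative and unital. The candidate inverse sends $\delta \in Der(G,N)$ to the self-map $u_\delta$ of $G$ given by $u_\delta(x) = x\,\delta(x)$; this is the association already indicated in the discussion above. For the well-definedness in the two directions: if $u \in End_N(G)$, then $\delta_u(x) = x^{-1}u(x)$ lies in $N$ by the very definition of $End_N(G)$, and writing $(xy)^{-1}u(xy) = y^{-1}\bigl(x^{-1}u(x)\bigr)u(y) = y^{-1}\delta_u(x)\,y\cdot y^{-1}u(y)$ shows $\delta_u(xy) = \delta_u(x)^y\delta_u(y)$, so $\delta_u \in Der(G,N)$; conversely, if $\delta \in Der(G,N)$, then $u_\delta(xy) = xy\,\delta(x)^y\delta(y) = x\delta(x)\,y\delta(y) = u_\delta(x)u_\delta(y)$ (using $y\,\delta(x)^y = \delta(x)\,y$), so $u_\delta$ is an endomorphism, and $x^{-1}u_\delta(x) = \delta(x) \in N$ puts it in $End_N(G)$.

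The two constructions are mutually inverse, since $\delta_{u_\delta}(x) = x^{-1}\bigl(x\delta(x)\bigr) = \delta(x)$ and $u_{\delta_u}(x) = x\bigl(x^{-1}u(x)\bigr) = u(x)$; hence $u \mapsto \delta_u$ is a bijection. It sends the identity $\mathrm{id}_G$ of the monoid $End_N(G)$ to the derivation $x \mapsto x^{-1}x = 1$, which is the zero of $Der(G,N)$, i.e.\ the identity of the adjoint monoid. For multiplicativity I would take the product on $End_N(G)$ in the order that matches the ring multiplication $(\delta_1\delta_2)(x) = \delta_2(\delta_1(x))$ — that is, $(uv)(x) = v(u(x))$ — and compute
\[
(uv)(x) = v\bigl(x\,\delta_u(x)\bigr) = v(x)\,v\bigl(\delta_u(x)\bigr) = x\,\delta_v(x)\,\delta_u(x)\,\delta_v\bigl(\delta_u(x)\bigr),
\]
so that $\delta_{uv}(x) = \delta_v(x)\,\delta_u(x)\,\delta_v(\delta_u(x))$. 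On the ring side, $(\delta_u \circ \delta_v)(x) = (\delta_u + \delta_v + \delta_u\delta_v)(x) = \delta_u(x)\,\delta_v(x)\,\delta_v(\delta_u(x))$. Since $N$ is abelian, $\delta_u(x)\delta_v(x) = \delta_v(x)\delta_u(x)$, so the two sides coincide and $\delta_{uv} = \delta_u \circ \delta_v$.

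This establishes that $u \mapsto \delta_u$ is an isomorphism of monoids; since an isomorphism of monoids restricts to an isomorphism between the groups of invertible elements, it induces an isomorphism $Aut_N(G) \cong Der(G,N)^\circ$, which is the final clause. The only point that needs care is the bookkeeping with composition order: one has to match the (reversed-looking) ring multiplication on $Der(G,N)$ with the appropriate composition convention on $End_N(G)$, and then it is exactly the commutativity of $N$ that makes the two normal forms above agree; with the naive conventions and a nonabelian $N$ one would only obtain an anti-isomorphism. All the remaining manipulations are routine identities inside $G$.
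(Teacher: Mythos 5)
Your verification is correct. The paper itself gives no proof of this proposition --- it is quoted from Laue's paper (Lemma 3.1 of \cite{Lau}) --- and your argument is the standard direct check: the explicit inverse $\delta \mapsto u_\delta$, the two well-definedness computations, and the normal-form comparison $\delta_{uv}(x)=\delta_v(x)\delta_u(x)\delta_v(\delta_u(x))$ versus $(\delta_u\circ\delta_v)(x)=\delta_u(x)\delta_v(x)\delta_v(\delta_u(x))$, which agree because $N$ is abelian. Your closing caveat is also the right one to flag: with the ring multiplication $(\delta_1\delta_2)(x)=\delta_2(\delta_1(x))$ as defined in the paper, one must read the monoid operation on $End_N(G)$ as $(uv)(x)=v(u(x))$ (maps acting on the right, as in Laue) for $u\mapsto\delta_u$ to be an isomorphism rather than an anti-isomorphism; either way the induced map on groups of invertible elements is as claimed.
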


This relation was applied by H. Laue in \cite{Lau} to prove some analogues of Shoda's results.  In \cite{CM}, F. Catino and M. Miccoli  showed that the main results about $IA$-automorphisms of $2$-generated metabelian groups can be derived in a natural way from Laue's relation, with a considerable gain of clarity.  Note also that it was used implicitly by A. Caranti and S. Mattarei (see \cite{Car}), to prove results about the automorphisms of $p$-groups of maximal class.  Excepting these papers, and in spite of its interest and the considerable work devoted to the group automorphisms, it seems that Laue's relation was ignored completely in the existing litterature.\\     
The results of this paper may be seen as the fruit of further investigations of the interplay between Rings and Group Automorphisms, summarized in Proposition 1.1.\\ 
\\First, let us fix some notation. The letter $p$ denotes a prime number.  Having a group $G$, $\gamma_i=\gamma_i(G)$ and $Z_i=Z_i(G)$ denote respectively the terms of the lower and the upper central series of $G$.  We denote by $\Omega_{\{n\}}(G)$ the set of all elements $x \in G$ such that $x^{p^n}=1$, and $\Omega_n(G)$ denotes the subgroup generated by $\Omega_{\{n\}}(G)$. By $d(G)$ we denote the minimal number of generators of $G$, the (Pr\"{u}fer) rank of $G$  is defined to be 
$$rk(G) = sup\{d(H), H \mbox { a finitely generated subgroup of } G\}, $$
 and the exponent of $G$ is denoted by $exp(G)$.\\
As a special notation, $P(G)$ will denote $\gamma_2(G) G^4$ if $p=2$, and $\gamma_2(G) G^p$ if $p>2$, $S(G)$ denotes $Z(G) \cap P(G)$. 
We denote by $r$ and $s$ the integers defined by $exp(G/G')=p^r$ and $exp(Z(G))=p^s$.\\
\\Let $R$ be a ring.  We say that $R$ is left $p$-nil if every element $x$ of order $p$ ($4$ if $p =2$)  in $R^+$ is a left annihilator of $R$, that is $ p x =0$  ($4x=0$ if $p=2$) implies $xy=0$, for all $y\in R$.  We say that $R$ is right $p$-nil if its opposite ring is left $p$-nil.  The ring $R$ is said to be $p$-nil if it is left and right $p$-nil.   

The first result shows that the $p$-power structure of the adjoint group of such a ring $R$ is very close to that of $R^+$.
\begin{thma}
Let $R$ be a $p$-ring.  If $R$ is left or right $p$-nil, then $\Omega_{\{n\}}(R^\circ)=\Omega_n(R^+)$, for every $n \geq 1$.  In particular we have $\Omega_n(R^\circ)=\Omega_{\{n\}}(R^\circ)$.  
\end{thma}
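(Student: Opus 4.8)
The plan is to make two reductions and then prove the two inclusions separately. First, both sides are unchanged on replacing $R$ by its opposite ring: $R^+=(R^{\mathrm{op}})^+$, and $x^{\circ p^n}$ equals $(1+x)^{p^n}-1$ computed in the unitalisation, which involves only powers of the single element $x$ and hence takes the same value in $R$ and in $R^{\mathrm{op}}$. So I may assume $R$ is left $p$-nil. Put $I=\Omega_1(R^+)$ if $p$ is odd and $I=\Omega_2(R^+)$ if $p=2$; thus $I=\{x:px=0\}$ (resp. $\{x:4x=0\}$), and the hypothesis says exactly $IR=0$. If $pz=0$ (resp. $4z=0$) and $r\in R$ then $p(rz)=r(pz)=0$, so $I$ is a two-sided ideal; $IR=0$ then yields $I^2=0$ and, for $a\in I$, $a\circ b=a+b+ab=a+b$ for every $b$, so $I$ is normal in $R^\circ$ and $\circ$ agrees with $+$ on $I$. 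One checks easily that $R/I$ is again a left $p$-nil $p$-ring and that $(R/I)^\circ=R^\circ/I$.

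For the inclusion $\Omega_n(R^+)\subseteq\Omega_{\{n\}}(R^\circ)$ I would induct on $n$. If $px=0$ (take $p$ odd; the case $p=2$ runs the same way with base cases $n=1,2$), then $x\in I$, so $x^2\in IR=0$, whence $x^i=0$ for $i\ge 2$ and $x^{\circ p}=\sum_{i\ge 1}\binom{p}{i}x^i=px=0$. For $n>1$: if $p^nx=0$ then $p^{n-1}x\in I$, so $p^{n-1}\bar x=0$ in $R/I$; by induction $\bar x^{\circ p^{n-1}}=0$, i.e. $x^{\circ p^{n-1}}\in I$, and then $x^{\circ p^n}=\bigl(x^{\circ p^{n-1}}\bigr)^{\circ p}=p\,x^{\circ p^{n-1}}=0$, since $\circ=+$ on the $p$-torsion ideal $I$. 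In particular this already shows $\Omega_n(R^+)\subseteq R^\circ$, every such element having $\circ$-order dividing $p^n$.

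The reverse inclusion $\Omega_{\{n\}}(R^\circ)\subseteq\Omega_n(R^+)$ is where I expect the real work to lie (I again take $p$ odd, $p=2$ being entirely analogous with the obvious shifts in exponents). Given $x$ with $x^{\circ p^n}=0$, one may replace $R$ by the subring it generates, which is commutative and still a left $p$-nil $p$-ring of finite additive exponent $p^e$ (the additive order of $x$); then one inducts on $e$. If $e\le n$ there is nothing to prove, and if $e>n$ then applying the inductive hypothesis in $R/I$ forces $p^{n+1}x=0$, reducing to the case $e=n+1$. There, either $p^nx=0$ and we are done, or $w:=p^nx$ is a nonzero element of $I$, so $wR=0$; consequently $p^nx^i=(p^nx)x^{\,i-1}=0$ for all $i\ge 2$, and in $0=x^{\circ p^n}=p^nx+\sum_{i=2}^{p^n}\binom{p^n}{i}x^i$ every term with $p\nmid i$ vanishes, since then $p^n\mid\binom{p^n}{i}$. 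What remains are the terms with $p\mid i$, where $\binom{p^n}{i}=p^{\,n-v_p(i)}u_i$ with $u_i$ prime to $p$, and the crux is to show these cannot add up to the nonzero element $-p^nx\in I$. I would handle this by a further induction on the nilpotency degree of $x$ — note $x^{\circ p^n}=0$ already forces $x$ nilpotent, since one gets $x^{p^n}\in pR$ and hence $x^{kp^n}\in p^kR$ — again using heavily that low powers of $x$ lie in $I$ and therefore annihilate $R$; this is the main obstacle.

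Finally, the ``in particular'' is immediate: $\Omega_n(R^+)=\{x:p^nx=0\}$ is closed under $\circ$ (if $p^nx=p^ny=0$ then $p^n(x\circ y)=p^n(x+y)+(p^nx)y=0$) and under $\circ$-inversion (by the first inclusion $x^{\circ -1}=x^{\circ(p^n-1)}$, which the same computation kills), hence it is already a subgroup of $R^\circ$ and so coincides with the subgroup $\Omega_n(R^\circ)$ that it generates.
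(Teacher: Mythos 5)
Your reduction to the left $p$-nil case, the inclusion $\Omega_n(R^+)\subseteq\Omega_{\{n\}}(R^\circ)$, and the closing ``in particular'' step are all correct, and your base case $n=1$ of the forward inclusion is essentially the paper's computation. The genuine gap is in the reverse inclusion, which is the harder half of the theorem: after reducing to the subring generated by $x$, of additive exponent $p^{n+1}$, you arrive at $0=p^nx+\sum_{p\mid i}\binom{p^n}{i}x^i$ and you explicitly do not prove that the remaining sum cannot equal the nonzero element $-p^nx$; the proposed ``further induction on the nilpotency degree of $x$'' is left as a hope, so as written the theorem is not proved.

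The missing idea is an iterated annihilation argument, which is exactly what the paper uses. In your setting (for $p$ odd): $p^nx\in I$ gives $p^nx^2=0$; then $p^{n-1}x^2\in I$, so $p^{n-1}x^3=0$, and inductively $p^{\,n-k+1}x^{\,k+1}=0$ for all $k\ge 1$. Since $v_p\bigl(\binom{p^n}{i}\bigr)=n-v_p(i)$ and $i-v_p(i)\ge 2$ for every $i\ge 2$ (when $p\mid i$ use $i\ge p^{v_p(i)}\ge v_p(i)+2$ for odd $p$), every term $\binom{p^n}{i}x^i$ with $i\ge 2$ vanishes individually, forcing $p^nx=0$; for $p=2$ one needs an extra pass (first multiply by $2$ to get $2^{n+1}x=0$, then rerun with the sharper information). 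The paper avoids your long expansion altogether by structuring the induction differently: it proves only the case $n=1$ by this trick (if $x^{(p)}=0$ and $x$ has additive order $p^m$ with $m\ge 2$, then $p^{m-1}x^2=0$ and $p^{m-2}x^i=0$ for $i\ge 3$, so multiplying $px=-\sum_{i\ge 2}\binom{p}{i}x^i$ by $p^{m-2}$ yields $p^{m-1}x=0$, a contradiction), and for $n>1$ it passes to $R/\Omega_{n-1}(R^+)$, which is again left $p$-nil by its Lemma 2.3, so that only $p$-th adjoint powers ever need to be expanded. Either fix works, but one of them must actually be carried out; until then the equality $\Omega_{\{n\}}(R^\circ)=\Omega_n(R^+)$ is only half proved.
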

It follows immediately that
\begin{cora}
Let $R$ be a $p$-ring.  If $R$ is $p$-nil, then $\Omega_{1}(R^\circ) \leq Z(R^\circ)$ ($\Omega_{2}(R^\circ) \leq Z(R^\circ)$, for $p=2$), in other word, $R^\circ$ is $p$-central.
\end{cora}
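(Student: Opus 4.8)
The plan is to deduce this directly from Theorem~A, together with the elementary observation that a two-sided annihilator of $R$ is automatically central in the adjoint group $R^\circ$. Throughout, set $q=p$ if $p>2$ and $q=4$ if $p=2$, so that the $p$-nil hypothesis says precisely that every $x\in R$ with $qx=0$ satisfies $xR=Rx=0$ (using left $p$-nil for $xR=0$ and right $p$-nil for $Rx=0$).

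First I would record the annihilator computation. If $x\in R$ with $qx=0$, then in particular $x^2=0$, so $x\circ(-x)=(-x)\circ x=-x^2=0$; hence $x\in R^\circ$ with inverse $-x$. Moreover, for every $a\in R^\circ$ one has $x\circ a=x+a+xa=x+a=a+x+ax=a\circ x$, so $x\in Z(R^\circ)$. This shows $\Omega_{\{1\}}(R^+)\subseteq Z(R^\circ)$ when $p>2$, and $\Omega_{\{2\}}(R^+)\subseteq Z(R^\circ)$ when $p=2$.

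Next I would pass to the generated subgroups. Since $R^+$ is an abelian $p$-group, the set of elements killed by $p^n$ is already a subgroup, so $\Omega_n(R^+)=\Omega_{\{n\}}(R^+)$; combined with the previous step this gives $\Omega_1(R^+)\subseteq Z(R^\circ)$ for $p>2$ and $\Omega_2(R^+)\subseteq Z(R^\circ)$ for $p=2$. Finally, applying Theorem~A (with $n=1$, resp. $n=2$) we have $\Omega_1(R^\circ)=\Omega_{\{1\}}(R^\circ)=\Omega_1(R^+)$ (resp. $\Omega_2(R^\circ)=\Omega_{\{2\}}(R^\circ)=\Omega_2(R^+)$), whence $\Omega_1(R^\circ)\leq Z(R^\circ)$ (resp. $\Omega_2(R^\circ)\leq Z(R^\circ)$), which is exactly the assertion that $R^\circ$ is $p$-central.

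There is no real obstacle once Theorem~A is in hand; the only points requiring care are that the $p$-nil hypothesis must be used on both sides so that the relevant elements annihilate $R$ on the left and on the right (giving both $x^2=0$ and the commutation $xa=0=ax$), and that one must invoke the ``$\Omega_n=\Omega_{\{n\}}$'' half of Theorem~A in order to identify $\Omega_1(R^\circ)$ (resp. $\Omega_2(R^\circ)$) with the additive object we control.
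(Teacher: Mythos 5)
Your proof is correct and takes essentially the same route the paper intends: Corollary A is stated as an immediate consequence of Theorem A, namely by identifying $\Omega_{1}(R^\circ)$ (resp. $\Omega_{2}(R^\circ)$ for $p=2$) with $\Omega_{1}(R^+)$ (resp. $\Omega_{2}(R^+)$) and observing that, by the two-sided $p$-nil hypothesis, these elements annihilate $R$ on both sides and hence are central for the circle operation. Your explicit verification that such an $x$ lies in $R^\circ$ with inverse $-x$ and satisfies $x\circ a=a\circ x$ is exactly the omitted routine detail.
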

Moreover if we assume that $R$ is a finite $p$-ring and $R^+$ can be generated by $d$ elements, then every subgroup of $R^\circ$ can be generated by $d$ elements. 

\begin{thmb}
Let $R$ be a finite $p$-ring.  If $R$ is left or right $p$-nil, then $rk(R^\circ) = d(R^{+})$.  In particular, $rk(R^\circ) = d(\Omega_{1}(R^\circ))$.
\end{thmb}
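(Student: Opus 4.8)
The plan is to prove the two inequalities $rk(R^\circ)\ge d(R^+)$ and $rk(R^\circ)\le d(R^+)$ separately, and to note that $\Omega_1(R^\circ)$ is a subgroup realising the supremum, which gives the ``in particular'' for free. First I would record some consequences of Theorem A. Taking $n$ with $p^n=\exp(R^+)$ gives $\Omega_{\{n\}}(R^\circ)=\Omega_n(R^+)=R^+$, so $R^\circ=R$ as sets; thus $R^\circ$ is a finite group of order $|R^+|=p^k$, i.e. a finite $p$-group, with $\exp(R^\circ)=\exp(R^+)=:p^m$. Put $I=\Omega_1(R^+)=\{x\in R:px=0\}$, the socle of the finite abelian $p$-group $R^+$, which is elementary abelian of rank $d(R^+)$. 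Each element of $I$ has additive order dividing $p$ (hence dividing $4$ when $p=2$), so by the left or right $p$-nil hypothesis $I$ annihilates $R$ on the corresponding side; in particular $I$ is an ideal, $I^2=0$, and $x\circ y=x+y$ for $x,y\in I$. By Theorem A, $\Omega_{\{1\}}(R^\circ)=I$, and since $\circ$ reduces to $+$ on $I$ the subgroup $\Omega_1(R^\circ)=\Omega_{\{1\}}(R^\circ)=(I,+)$ of $R^\circ$ is elementary abelian of rank $d(R^+)$. Hence $rk(R^\circ)\ge d(\Omega_1(R^\circ))=d(R^+)$.

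For the reverse inequality I would show that every subgroup $H\le R^\circ$ satisfies $d(H)\le \dim_{\mathbb F_p}\Omega_{\{1\}}(H)$, which suffices since $\Omega_{\{1\}}(H)=H\cap\Omega_{\{1\}}(R^\circ)=H\cap I$ has rank $\le d(R^+)$. As $\Omega_{\{i\}}(R^\circ)=\Omega_i(R^+)$ is characteristic in $R^\circ$ (Theorem A), the subgroups $H_i:=\Omega_{\{i\}}(H)=H\cap\Omega_i(R^+)$ form a normal series $1=H_0\le H_1\le\cdots\le H_m=H$; and for $x,y\in\Omega_i(R^+)$ one has $p^{i-1}(xy)=(p^{i-1}x)y=0$ because $p^{i-1}x\in I$ annihilates $R$, so $xy\in\Omega_{i-1}(R^+)$ and $x\circ y\equiv x+y\pmod{\Omega_{i-1}(R^\circ)}$; hence every factor $H_i/H_{i-1}$ is elementary abelian, say of dimension $n_i$, with $\sum_i n_i=\log_p|H|$ and $n_1=\dim\Omega_{\{1\}}(H)$.

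The crux will be a lower bound on $U:=\langle x^{\circ p}:x\in H\rangle$, the subgroup generated by the $p$-th powers. Expanding $v^{\circ p}=\sum_{k=1}^p\binom{p}{k}v^k$ and using that, for $v\in\Omega_{i+1}(R^+)$, $v^2\in\Omega_i(R^+)$ and $v^k\in\Omega_{i-1}(R^+)$ for $k\ge 3$ (write $v^k=v^2v^{k-2}$ — or $v^{k-2}v^2$ in the right $p$-nil case — and use that $p^{i-1}v^2\in I$ annihilates $R$, together with $p\mid\binom pk$ for $1\le k\le p-1$), one obtains $v^{\circ p}\equiv pv\pmod{\Omega_{i-1}(R^\circ)}$ for odd $p$ (and $v^{\circ 2}\equiv 2v+v^2$ for $p=2$, where in fact $2^{i-1}v^2=0$, so the rest of the argument is unaffected). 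From this the $p$-power map $v\mapsto v^{\circ p}$ induces a well-defined injective homomorphism $\varphi_i:H_{i+1}/H_i\to H_i/H_{i-1}$ (injective because $v^{\circ p}\in\Omega_{i-1}(R^+)$ forces $pv\in\Omega_{i-1}(R^+)$, i.e. $v\in\Omega_i(R^+)$). Since every $v^{\circ p}$ with $v\in H_{i+1}$ lies in $U\cap H_i$, the image of $\varphi_i$ — of dimension $n_{i+1}$ — embeds into $(U\cap H_i)H_{i-1}/H_{i-1}$, so $\dim\big((U\cap H_i)/(U\cap H_{i-1})\big)\ge n_{i+1}$ for all $i$; summing, $\log_p|U|\ge\sum_{j\ge 2}n_j$. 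As $U$ lies in the Frattini subgroup $\Phi(H)$, the Burnside basis theorem gives $d(H)=\log_p|H:\Phi(H)|\le\log_p|H:U|\le n_1=\dim\Omega_{\{1\}}(H)\le d(R^+)$. Combined with the first part, $rk(R^\circ)=d(R^+)=d(\Omega_1(R^\circ))$.

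I expect the main obstacle to be the congruence $v^{\circ p}\equiv pv\pmod{\Omega_{i-1}(R^\circ)}$ and the verification that $\varphi_i$ is a homomorphism (the discrepancy $(v\circ w)^{\circ p}\circ(v^{\circ p}\circ w^{\circ p})^{\circ -1}$ must again be pushed into $\Omega_{i-1}(R^+)$): this is precisely where the left or right $p$-nil hypothesis is indispensable, being exactly what forces the mixed monomials $v^k$ ($k\ge 2$) down one level in the $\Omega$-filtration. The case $p=2$, as usual, requires a separate but parallel bookkeeping, and the identification of $\Omega_{\{1\}}(R^\circ)$ with the socle of $R^+$ together with the manipulations of the series $(H_i)$ should be routine given Theorem A.
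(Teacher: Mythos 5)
Your proposal is correct, but it takes a genuinely different route from the paper's. The paper proves the same key claim---$d(H)\le d(\Omega_1(H))$ for every $H\le R^\circ$---by a double induction on $|R|$ and $|H|$: it uses Lemma 2.4 (the ideal $U=\mathrm{Ann}_r(R)\cap\Omega_1(R^+)$ is nonzero, central in $R^\circ$, and $R/U$ is again left $p$-nil), splits according to whether $H\cap U\le\Phi(H)$, and in the terminal case invokes regularity of $p$-groups of class at most $2$ for $p>2$ (for $p=2$ it observes $H\le\Omega_2(R^\circ)$ is abelian), finishing with Theorem A via $\Omega_1(R^\circ)=\Omega_1(R^+)$. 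You instead argue directly along the filtration $H_i=H\cap\Omega_i(R^+)$: the factors are elementary abelian, the $p$-th power map induces injections $H_{i+1}/H_i\to H_i/H_{i-1}$, and counting gives $|H:\langle x^{\circ p}\rangle|\le p^{n_1}$, hence $d(H)\le\dim\Omega_{\{1\}}(H)$. The verifications you flag as the crux do go through: since $\Omega_{i-1}(R^+)$ is an ideal, additive congruence modulo it coincides with congruence in $R^\circ$ modulo the normal subgroup $\Omega_{\{i-1\}}(R^\circ)$, the estimates $v^2\in\Omega_i$, $v^k\in\Omega_{i-1}$ for $k\ge3$, and $xy\in\Omega_{i-1}$ for $x,y\in\Omega_i$ are exactly what the left (or right) $p$-nil hypothesis supplies, and the discrepancy $(v\circ w)^{\circ p}\circ(v^{\circ p}\circ w^{\circ p})^{\circ-1}$ lands in $H\cap\Omega_{i-1}(R^+)=H_{i-1}$ because both factors are additively congruent to $pv+pw$ modulo $\Omega_{i-1}(R^+)$; the $p=2$ bookkeeping works as you indicate because elements of $\Omega_2(R^+)$ are already annihilators on the relevant side. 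What your approach buys is a self-contained, induction-free argument that needs neither the auxiliary annihilator ideal of Lemma 2.4 nor the theory of regular $p$-groups, and treats $p=2$ and $p>2$ essentially uniformly; what the paper's approach buys is brevity, given that Lemma 2.4 and regularity (Huppert II.10) are available, and reuse of machinery it also wants for Corollary B.
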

It is conjectured in (\cite{Dick}, see Remark (b) in the last paragraph) that $rk(R^\circ) \leq \alpha. rk(R^{+})$, for any nilpotent finite $p$-ring $R$, with $\alpha =2$ if $p=2$, and $\alpha =1$ if $p>2$  (actually this conjecture is formulated for the class of nil rings and the class of radical periodic rings, whose additive groups have a finite  rank, but from that paper one can reduce it to the class of finite nilpotent $p$-rings).  Thus Theorem B confirmes this conjecture in the class of $p$-nil rings.\\
Note that particularly O. Dickenschied proved the above inequality for finite nilpotent $p$-rings, with $\alpha =3$ if $p=2$, and $\alpha =2$ if $p>2$, using powerful $p$-groups (see \cite{Dick}, Lemma 2.4).  The following corollary generalizes this to the class of all finite $p$-rings, though with an alternative (self contained) proof.
\begin{corb}
Let be $R$ a finite $p$-ring and $P$ a $p$-sylow of $R^\circ$.  Then $rk(P) \leq \alpha. d(R^{+})$, with $\alpha =3$ if $p=2$, and $\alpha =2$ if $p>2$.
\end{corb}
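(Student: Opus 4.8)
The plan is to deduce Corollary B from Theorem B by passing to a single, carefully chosen $p$-nil ideal of $R$. Put $I := pR$ if $p$ is odd and $I := 4R$ if $p = 2$; this is always a two-sided ideal, and since $R$ is a finite $p$-ring every element of $I$ is nilpotent, so $I$ is a nil --- hence radical --- ring, and in particular $I^\circ = I$ as a set. First I would verify that $I$, viewed as a ring in its own right, is $p$-nil: an element $x \in I$ with $px = 0$ (resp. $4x = 0$) may be written $x = pw$ (resp. $x = 4w$) with $p^2 w = 0$ (resp. $16w = 0$), so for any $y = pz \in I$ (resp. $y = 4z$) we get $xy = (pw)(pz) = p^2(wz) = (p^2 w)z = 0$, and symmetrically $yx = 0$. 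Since $I^+$ is a finite $p$-group, Theorem B applies to $I$ and yields $rk(I^\circ) = d(I^+) \le d(R^+)$, the last inequality because $I^+ \le R^+$ and $rk(R^+) = d(R^+)$.

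Next I would analyse the quotient. The ring projection $R \to R/I$ induces a homomorphism $R^\circ \to (R/I)^\circ$ of adjoint groups with kernel $R^\circ \cap I = I^\circ$, so $I^\circ \trianglelefteq R^\circ$. The one point requiring an argument is surjectivity: given $\bar a \in (R/I)^\circ$ with circle inverse $\bar b$, lift to $a, b \in R$; then $a \circ b$ and $b \circ a$ lie in $I$, and because $I$ is radical they are circle-invertible inside $I$, which supplies a right and a left circle inverse for $a$ and forces $a \in R^\circ$. Hence $R^\circ / I^\circ \cong (R/I)^\circ$. As $I^\circ$ is a normal $p$-subgroup of $R^\circ$ it lies inside the given Sylow $p$-subgroup $P$, and $P/I^\circ$ is a Sylow $p$-subgroup of $(R/I)^\circ$; since $|(R/I)^\circ| \le |R/I|$ this gives $rk(P/I^\circ) \le \log_p |R/I|$. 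Writing $R^+ \cong \bigoplus_{i=1}^{d(R^+)} \mathbb{Z}/p^{a_i}$ one computes $|R/pR| = p^{d(R^+)}$ when $p$ is odd and $|R/4R| \le 2^{2d(R^+)}$ when $p = 2$, so $rk(P/I^\circ) \le d(R^+)$, resp. $\le 2d(R^+)$.

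To conclude, for every subgroup $H \le P$ one has $d(H) \le d(H \cap I^\circ) + d(H/(H \cap I^\circ)) \le rk(I^\circ) + rk(P/I^\circ)$, whence $rk(P) \le rk(I^\circ) + rk(P/I^\circ)$; combined with the two estimates above this yields $rk(P) \le 2d(R^+)$ for $p$ odd and $rk(P) \le 3d(R^+)$ for $p = 2$, as required. The only step that is not purely formal is the surjectivity of $R^\circ \to (R/I)^\circ$, and it rests entirely on $I$ being nil; apart from that the argument consists merely of the choice of $I$ --- simultaneously $p$-nil and of additive index $p^{d(R^+)}$, resp. at most $2^{2d(R^+)}$ --- together with Theorem B, and it never invokes the Wedderburn decomposition of $R/J(R)$, which is presumably the sense in which the proof is ``self-contained.''
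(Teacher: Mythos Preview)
Your proof is correct and follows essentially the same route as the paper: choose the ideal $I=pR$ (respectively $4R$), apply Theorem~B to $I$ to bound $rk(I^\circ)\le d(R^+)$, bound the quotient piece by $\log_p|R/I|$, and combine via $d(H)\le d(H\cap I^\circ)+d(H/H\cap I^\circ)$. The only differences are cosmetic: you supply explicit justifications for why $I$ is $p$-nil, why $I$ is nil (hence $I^\circ=I$), and why $R^\circ\to(R/I)^\circ$ is surjective, whereas the paper simply asserts these; conversely the paper phrases the final estimate for an arbitrary $p$-subgroup $H\le R^\circ$ rather than reducing first to $I^\circ\le P$.
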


Let $N$ be an abelian normal subgroup of a $p$-group $G$.  It is straightforward to see that the set of derivations in $Der(G,N)$ that are trivial on $\Omega_{1}(N)$ or $\Omega_{2}(N)$ if $p=2$  (which in fact can be identified to the subring $Der(G/\Omega_{1}(N),N)$ or $Der(G/\Omega_{2}(N),N)$ for $p=2$) forms a left $p$-nil ring, it follows that the above theorems apply to the group of automorphisms acting trivially on $G/N$ and $\Omega_{1}(N)$ ($\Omega_{2}(N)$ if $p=2$).  However, we would be more interested to the case of the ring $Hom(G,S(G))$ which is right $p$-nil.  Essentially, this fact can be used to prove the following.

\begin{thmc}
Let $G$ be a finite $p$-group of class $c$.  Then the exponent of $Aut_{P(G)}(G)$ does not exceed $p^{t^2c-t}$, where $t=min\{r,s\}$.  Moreover if $G$ is generated by $d$ elements then $exp(P) \leq p^{t^2c-t +d-1}$ if $p>2$ and $exp(P) \leq p^{t^2c-t +2d-1}$ if $p=2$, for any $p$-subgroup $P$ of $Aut(G)$.
\end{thmc}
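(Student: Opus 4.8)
The plan is to prove the bound on $exp(Aut_{P(G)}(G))$ by induction on the nilpotency class $c$, using the ring $Hom(G,S(G))$ together with Theorem A. The starting ingredient is the bound $exp(Aut_{S(G)}(G))\le p^{t}$, valid for every finite $p$-group $G$. Since $S(G)=Z(G)\cap P(G)$ is central, the $G$-action on it is trivial, so $Der(G,S(G))=Hom(G,S(G))$ and, by Proposition 1.1, $Aut_{S(G)}(G)$ is the adjoint group $R^{\circ}$ of the finite $p$-ring $R=Hom(G,S(G))$. This ring is right $p$-nil: a homomorphism $f$ of additive order $p$ (respectively $4$ if $p=2$) kills $\gamma_{2}(G)$ and all $p$-th (respectively $4$-th) powers, while every value of any $g\in Hom(G,S(G))$ lies in $S(G)\subseteq P(G)=\gamma_{2}(G)G^{p}$ (respectively $\gamma_{2}(G)G^{4}$), hence is a product of such elements; so $f(g(x))=1$, i.e. $gf=0$. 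Moreover the additive group $R^{+}=Hom(G/\gamma_{2}(G),S(G))$ has exponent dividing $\gcd(exp(G/G'),exp(S(G)))$, hence dividing $\gcd(p^{r},p^{s})=p^{t}$. Applying Theorem A with $n=t$ gives $\Omega_{\{t\}}(R^{\circ})=\Omega_{t}(R^{+})=R^{+}\supseteq R^{\circ}$, so $R^{\circ}=\Omega_{\{t\}}(R^{\circ})$ has exponent dividing $p^{t}$.

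For $c=1$, $G$ is abelian, $Z(G)=G$, so $S(G)=G\cap P(G)=P(G)$ and $Aut_{P(G)}(G)=Aut_{S(G)}(G)$; its exponent is at most $p^{t}\le p^{t^{2}-t}$ for $t\ge 2$, while for $t\le 1$ the group $G$ is elementary abelian or trivial, so $P(G)=1$ and $Aut_{P(G)}(G)=1$. Now let $c\ge 2$. Then $Aut_{S(G)}(G)$ and $Aut_{P(G)}(G)$, being the kernels of the $Aut(G)$-actions on the characteristic quotients $G/S(G)$ and $G/P(G)$, are normal in $Aut(G)$; in particular $Aut_{S(G)}(G)\trianglelefteq Aut_{P(G)}(G)$. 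Put $\hat{G}=G/S(G)$; since $\gamma_{c}(G)\le Z(G)\cap\gamma_{2}(G)\le S(G)$, the group $\hat{G}$ is a nontrivial $p$-group of class at most $c-1$. Reduction modulo $S(G)$ yields a homomorphism $Aut_{P(G)}(G)\to Aut(\hat{G})$ with kernel $Aut_{S(G)}(G)$ and image inside $Aut_{P(\hat{G})}(\hat{G})$ (using $P(\hat{G})=P(G)S(G)/S(G)$), so $Q:=Aut_{P(G)}(G)/Aut_{S(G)}(G)$ embeds in $Aut_{P(\hat{G})}(\hat{G})$, and hence $exp(Aut_{P(G)}(G))\le exp(Aut_{S(G)}(G))\,exp(Q)\le p^{t}\,exp(Aut_{P(\hat{G})}(\hat{G}))$ by the first ingredient.

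The crucial point is that the invariant $\hat{t}=\min(\hat{r},\hat{s})$ of $\hat{G}$ satisfies $1\le\hat{t}\le t$. Indeed $\hat{r}\le r$ because $\hat{G}^{ab}=G/\gamma_{2}(G)S(G)$ is a quotient of $G^{ab}$; and $\hat{s}\le s$ because if $xS(G)\in Z(\hat{G})$ then $[x,G]\le S(G)\le Z(G)$, so $x\in Z_{2}(G)$, every commutator $[x,g]$ is central, $[x^{p^{s}},g]=[x,g]^{p^{s}}=1$ since $exp(Z(G))=p^{s}$, and therefore $x^{p^{s}}\in Z(G)\cap G^{p}\le S(G)$; also $\hat{t}\ge 1$ because $\hat{G}\ne 1$. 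Hence, by the inductive hypothesis applied to $\hat{G}$, $exp(Aut_{P(\hat{G})}(\hat{G}))\le p^{\hat{t}^{2}(c-1)-\hat{t}}$, so $exp(Aut_{P(G)}(G))\le p^{\,t+\hat{t}^{2}(c-1)-\hat{t}}\le p^{\,t+t^{2}(c-1)-1}\le p^{\,t^{2}c-t}$, the last inequality amounting to $(t-1)^{2}\ge 0$. This proves the first statement, and I expect the main obstacle to be exactly this step: isolating the correct induction hypothesis and proving $\hat{s}\le s$, which is precisely why one must quotient by $S(G)=Z(G)\cap P(G)$ rather than by a larger central subgroup; the remainder is routine bookkeeping with $P(\hat{G})=P(G)S(G)/S(G)$, $\gamma_{c}(G)\le S(G)$, and the normality of the automorphism groups involved.

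For the final assertion, let $P$ be a $p$-subgroup of $Aut(G)$ and $N=P\cap Aut_{P(G)}(G)\trianglelefteq P$, so $exp(N)\le p^{t^{2}c-t}$ by the first part. Then $P/N$ is a $p$-group embedding in $Aut(G)/Aut_{P(G)}(G)\hookrightarrow Aut(G/P(G))$, so $exp(P/N)$ is at most the exponent of a Sylow $p$-subgroup of $Aut(G/P(G))$. For $p>2$, $G/P(G)$ is elementary abelian of rank $d(G)\le d$, and a Sylow $p$-subgroup of $GL_{d(G)}(\mathbb{F}_{p})$, namely the group of upper unitriangular matrices, has exponent $p^{\lceil\log_{p}d(G)\rceil}\le p^{d-1}$. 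For $p=2$, $G/P(G)$ is abelian of exponent at most $4$ and rank $d(G)\le d$, and a Sylow $2$-subgroup of its automorphism group has exponent at most $2^{2d-1}$. Since $exp(P)\le exp(N)\,exp(P/N)$, the stated bounds follow.
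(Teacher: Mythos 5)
Your argument is fine for odd $p$, but the induction does not close for $p=2$, and the gap is exactly at the step you yourself flag as crucial. The inequality $\hat{s}\le s$ is justified by ``$x^{p^{s}}\in Z(G)\cap G^{p}\le S(G)$'', which uses $G^{p}\le P(G)$; this is true only for $p>2$, since for $p=2$ one has $P(G)=\gamma_{2}(G)G^{4}$ and $Z(G)\cap G^{2}$ need not lie in $S(G)$. In fact $\hat{t}\le t$ is simply false for $p=2$: take $G=\langle a,b \mid a^{4}=b^{4}=1,\ b^{-1}ab=a^{-1}\rangle$ of order $16$ and class $c=2$. Here $Z(G)=\langle a^{2},b^{2}\rangle$ has exponent $2$, so $s=1$ and $t=1$, while $\gamma_{2}(G)=\langle a^{2}\rangle$, $G^{4}=1$, hence $P(G)=S(G)=\langle a^{2}\rangle$ and $\hat{G}=G/S(G)\cong Z_{2}\times Z_{4}$, giving $\hat{r}=\hat{s}=\hat{t}=2$. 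Your chain then only yields $exp(Aut_{P(G)}(G))\le 2^{\,t+\hat{t}^{2}(c-1)-\hat{t}}=2^{3}$, whereas the theorem claims $2^{\,t^{2}c-t}=2$ (which is the true value here, since $Hom(G,S(G))$ has trivial multiplication). So the inductive scheme, as set up, cannot prove the $p=2$ case; it would need a different quotient or a different invariant for the even prime. A smaller point: in the last paragraph the bound $2^{2d-1}$ for the exponent of a $2$-subgroup of $Aut(G/P(G))$ is asserted without justification; it is exactly the Horosevskii-type bound (a $2$-automorphism of a $2$-group of order $2^{n}$ has order at most $2^{n-1}$) that the paper cites from Isaacs, so at minimum it needs a citation or proof.

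For comparison, the paper proves the first assertion without any induction on the class and uniformly in $p$: it first shows that $Z(Aut_{P(G)}(G))$ has exponent at most $p^{t}$ (a central element commutes with $Inn(G)\le Aut_{P(G)}(G)$, so its associated derivation takes values in $S(G)$, and Proposition 3.1(c) — your first paragraph — applies), and then shows via the Kaloujnine stability-group argument on the lower $p$-central series that $Aut_{\Phi(G)}(G)\supseteq Aut_{P(G)}(G)$ is nilpotent of class at most $tc-1$; since each upper central factor of a nilpotent group has exponent dividing that of the centre, $exp(Aut_{P(G)}(G))\le (p^{t})^{tc-1}=p^{t^{2}c-t}$. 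Your use of the right $p$-nil ring $Hom(G,S(G))$ and Theorem A to get the $p^{t}$ bound coincides with the paper's Proposition 3.1; the divergence (and the failure) is in replacing the nilpotency-class estimate by the quotient induction modulo $S(G)$.
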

As show the automorphism groups of elementary abelian $p$-groups, the $p$-exponent of $Aut(G)$ can not in general be  independent from the number of generators of $G$.\\
\\The following result is known in the litterature (see for instance \cite{Baer}).  It is nice that an elegant proof follows from our approach.  

\begin{thmd}
Let $G$ be an abelian $p$-group of rank $d$ and let $d'$ denote the rank of $P(G)$.  Then every $p$-subgroup $P$ of $Aut(G)$ can be generated by $dd'+\frac{d^2}{4}$ elements if $p>2$, and by $dd'+\frac{3d^2-d}{2}$ elements if $p=2$.   In particular for every such a $P$ we have $d(P) \leq \frac{5d^2}{4}$ if $p>2$, and  $d(P) \leq \frac{5d^2-d}{2}$ if $p=2$ .  
\end{thmd}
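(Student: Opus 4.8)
The plan is to apply Laue's correspondence (Proposition 1.1) directly to the abelian group $G$, to split $Aut(G)$ along the characteristic subgroup $P(G)$ into an ``infinitesimal'' part governed by Theorem~B and a ``linear'' part governed by the rank of a Sylow $p$-subgroup of $GL_d(\mathbb{F}_p)$, and to add the two contributions. Taking $N=G$ in Proposition 1.1 (legitimate since $G$ is abelian) gives $Der(G,G)=Hom(G,G)=End(G)$ with $\delta_u=u-1$, so $u\mapsto u-1$ identifies $Aut(G)$ with the adjoint group $End(G)^{\circ}$. We treat finite $G$ (the general finite-rank case reduces to this, since $Aut(G)$ is then residually finite and its $p$-subgroups of finite rank are finite). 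Here $P(G)=G^p$ for $p>2$ and $G^4$ for $p=2$, $S(G)=P(G)$, and, as recalled in the introduction, $Hom(G,P(G))$ is an ideal of $End(G)$ which is right $p$-nil.

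Assume first $p>2$ and set $C=\ker\bigl(Aut(G)\to Aut(G/P(G))\bigr)$. An automorphism $u$ lies in $C$ precisely when $\delta_u\in Hom(G,P(G))$, and since $Hom(G,P(G))$ is an ideal, the $\circ$-units it contains are exactly $Hom(G,P(G))^{\circ}$; thus $C\cong Hom(G,P(G))^{\circ}$ (every such $\delta$ is in fact nilpotent on the finite $G$, so $1+\delta\in Aut(G)$). As $Hom(G,P(G))$ is a finite right $p$-nil ring, Theorem~B gives $rk(C)=d\bigl(Hom(G,P(G))^{+}\bigr)$; writing $G=\bigoplus_{i=1}^{d}\mathbb{Z}/p^{a_i}$ and $P(G)=\bigoplus_{j=1}^{d'}\mathbb{Z}/p^{b_j}$ one computes $Hom(G,P(G))\cong\bigoplus_{i,j}\mathbb{Z}/p^{\min(a_i,b_j)}$, a sum of $dd'$ nonzero cyclic groups, so $rk(C)=dd'$. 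Since $G/P(G)=G/\Phi(G)\cong\mathbb{F}_p^{d}$, for a $p$-subgroup $P\le Aut(G)$ the image $PC/C$ is a $p$-subgroup of $GL_d(\mathbb{F}_p)$, hence conjugate into the group of upper unitriangular matrices, whose rank is the classical value $\lfloor d^2/4\rfloor$. Consequently $d(P)\le d(PC/C)+d(P\cap C)\le\lfloor d^2/4\rfloor+dd'\le dd'+\tfrac{d^2}{4}$.

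For $p=2$ one cannot replace $P(G)=G^4$ by $G^2$ here, because $Hom(G,G^2)$ need not be right $2$-nil; instead one works along $G\supseteq G^2\supseteq G^4$. Let $C_2\subseteq C_1$ be the kernels of $Aut(G)\to Aut(G/G^4)$ and of $Aut(G)\to Aut(G/G^2)$. As before $C_2\cong Hom(G,G^4)^{\circ}$ with $rk(C_2)=d\bigl(Hom(G,G^4)^{+}\bigr)=dd'$ by Theorem~B; the map $u\mapsto\delta_u$ embeds $C_1/C_2$ into $Hom(G/G^2,G^2/G^4)$, which is elementary abelian of rank $d\cdot rk(G^2)\le d^2$ (since $G^2/G^4\cong\mathbb{F}_2^{rk(G^2)}$ and $G/G^2\cong\mathbb{F}_2^{d}$); and $Aut(G)/C_1$ embeds in $GL_d(\mathbb{F}_2)$, a Sylow $2$-subgroup of which has rank $\lfloor d^2/4\rfloor$. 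Hence for a $2$-subgroup $P$ one gets $d(P)\le\lfloor d^2/4\rfloor+d^2+dd'$, and $\lfloor d^2/4\rfloor+d^2\le\tfrac{3d^2-d}{2}$ for every $d\ge1$ (this is $(d-1)^2\ge0$ for odd $d$, and $d^2\ge2d$ for even $d$), so $d(P)\le dd'+\tfrac{3d^2-d}{2}$. The final assertion is then immediate: $d'=rk(P(G))\le rk(G)=d$ forces $dd'\le d^2$, turning the two bounds into $\tfrac{5d^2}{4}$ and $\tfrac{5d^2-d}{2}$.

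The genuinely delicate point is the $p=2$ case: one must interpose the elementary-abelian layer $G^2/G^4$ and estimate it by hand rather than collapse everything onto $Aut(G/G^4)$, precisely because Theorem~B applies to the $2$-nil ring $Hom(G,G^4)$ but not to $Hom(G,G^2)$. The remaining ingredients — automatic $\circ$-invertibility of $1+\delta$ for $\delta\in Hom(G,P(G))$, the identification of the kernels with adjoint groups of ideals via Proposition 1.1, the computation $d(Hom(G,P(G))^{+})=dd'$, the value $\lfloor d^2/4\rfloor$ for the rank of a Sylow $p$-subgroup of $GL_d(\mathbb{F}_p)$, and the reduction to finite $G$ — are routine but should each be recorded with a little care.
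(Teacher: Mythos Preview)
Your proof is correct and follows essentially the same strategy as the paper's: split along $K=Aut_{P(G)}(G)\cong Hom(G,P(G))^{\circ}$, bound $d(P\cap K)$ by $dd'$ via Theorem~B (the paper packages this as Proposition~3.1(e)), and bound the quotient using $GL_d(\mathbb{F}_p)$. The only cosmetic difference is in the $p=2$ bookkeeping: the paper passes to $A=G/G^4$ and bounds the $2$-part of $|Aut(A)|$ by $2^{(d^2-d)/2}\cdot 2^{d^2}=2^{(3d^2-d)/2}$ (using the \emph{order} of a Sylow of $GL_d(\mathbb{F}_2)$ and of $Hom(A,A^2)$), whereas you run the same two-step filtration $C_2\subseteq C_1$ inside $Aut(G)$ and invoke Patterson's rank bound $\lfloor d^2/4\rfloor$ for $GL_d(\mathbb{F}_2)$, then check $\lfloor d^2/4\rfloor+d^2\le (3d^2-d)/2$. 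Either variant works; note that simply using the order bound $(d^2-d)/2$ in place of Patterson for $p=2$ would give you $(3d^2-d)/2$ on the nose and spare you the closing inequality.
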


\begin{cord}
Let $G$ be a finite $p$-group of rank $k$.  Then every $p$-subgroup $P$ of $Aut(G)$ can be generated by $\frac{9k^2}{4}$ elements if $p>2$, and by $\frac{7k^2-k}{2}$ elements if $p=2$.     
\end{cord}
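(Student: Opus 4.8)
Put $d=d(G)$, so $d\le rk(G)=k$, and recall that every section of $G$ has rank at most $k$. Fix a $p$-subgroup $P$ of $Aut(G)$. The idea is to split $P$ along the action on the abelianization of $G$: the part acting on $G/\gamma_2(G)$ is controlled by Proposition D, and the kernel of that action by the $p$-nil ring technology behind Theorem B.

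The restriction homomorphism $Aut(G)\to Aut\bigl(G/\gamma_2(G)\bigr)$ has kernel $Aut_{\gamma_2(G)}(G)=\{\alpha\in Aut(G):x^{-1}\alpha(x)\in\gamma_2(G)\ \text{for all }x\in G\}$, so $P/\bigl(P\cap Aut_{\gamma_2(G)}(G)\bigr)$ embeds into a $p$-subgroup of $Aut\bigl(G/\gamma_2(G)\bigr)$. Since $G/\gamma_2(G)$ is abelian of rank $\le k$, Proposition D bounds $d\bigl(P/(P\cap Aut_{\gamma_2(G)}(G))\bigr)$ by $\tfrac{5k^{2}}{4}$ if $p>2$ and by $\tfrac{5k^{2}-k}{2}$ if $p=2$. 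As $d(P)\le d\bigl(P\cap Aut_{\gamma_2(G)}(G)\bigr)+d\bigl(P/(P\cap Aut_{\gamma_2(G)}(G))\bigr)$ and $P\cap Aut_{\gamma_2(G)}(G)$ is a subgroup of $Aut_{\gamma_2(G)}(G)$, it suffices to prove $rk\bigl(Aut_{\gamma_2(G)}(G)\bigr)\le k^{2}$; this gives $d(P)\le k^{2}+\tfrac{5k^{2}}{4}=\tfrac{9k^{2}}{4}$ for $p>2$ and $d(P)\le k^{2}+\tfrac{5k^{2}-k}{2}=\tfrac{7k^{2}-k}{2}$ for $p=2$, as claimed.

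For the bound on the kernel, the model case sets the pattern. Since $S(G)=Z(G)\cap P(G)$ is central, every derivation $G\to S(G)$ is a homomorphism, so Proposition 1.1 gives $Aut_{S(G)}(G)\cong Hom(G,S(G))^{\circ}$; and as $Hom(G,S(G))$ is a finite right $p$-nil ring (see the paragraph preceding Theorem C), Theorem B yields $rk\bigl(Aut_{S(G)}(G)\bigr)=d\bigl(Hom(G,S(G))^{+}\bigr)=d\bigl(Hom(G/\gamma_2(G),S(G))\bigr)\le d(G/\gamma_2(G))\,d(S(G))\le d\cdot d(Z(G))\le k^{2}$. To pass from $Aut_{S(G)}(G)$ to the full stability group $Aut_{\gamma_2(G)}(G)$ one uses that each of its elements acts trivially on every lower central factor $\gamma_i(G)/\gamma_{i+1}(G)$; filtering $Aut_{\gamma_2(G)}(G)$ by the subgroups $\{\alpha:x^{-1}\alpha(x)\in\gamma_j(G)\}$, the successive factors embed into the (trivially-acted) derivation groups $Der\bigl(G/\gamma_2(G),\gamma_j(G)/\gamma_{j+1}(G)\bigr)$; the task is to assemble these into a single finite right $p$-nil ring whose adjoint group contains $Aut_{\gamma_2(G)}(G)$, with additive rank at most $d(G/\gamma_2(G))\,d(\gamma_2(G))\le k^{2}$, and then apply Theorem B once.

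I expect this assembling step to be the main obstacle. Carried out layer by layer the filtration only yields $rk\bigl(Aut_{\gamma_2(G)}(G)\bigr)\le(\text{class of }G)\cdot k^{2}$, and removing the dependence on the class requires realizing the whole stability group of the lower central series simultaneously as (a section of) one adjoint group of a finite right $p$-nil ring — most naturally one built from $Der(G,A)$ for a characteristic abelian normal subgroup $A$ controlling $\gamma_2(G)$ — it being the sharp equality $rk(R^{\circ})=d(R^{+})$ of Theorem B, rather than the weaker Corollary B, that makes the quadratic bound $k^{2}$ (hence the precise constants) attainable. Once this is in place the two estimates above combine to give the assertion; the case $p=2$ is handled identically, the replacement of $\tfrac{d^{2}}{4}$ by $\tfrac{3d^{2}-d}{2}$ being exactly the one inherited from Proposition D.
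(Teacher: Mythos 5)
There is a real gap, and you have located it yourself: the whole argument hinges on the claim $rk\bigl(Aut_{\gamma_2(G)}(G)\bigr)\le k^2$, which you do not prove. Your filtration by $\{\alpha: x^{-1}\alpha(x)\in\gamma_j(G)\}$ only gives a bound with a factor of the nilpotency class, and the hoped-for ``assembling'' into one right $p$-nil ring runs into an immediate obstruction: the ring $Der(G,N)$ of Proposition 1.1 is only defined for $N$ abelian and normal, and $\gamma_2(G)$ is not abelian in general, so there is no single derivation ring whose adjoint group naturally contains all of $Aut_{\gamma_2(G)}(G)$. So the decomposition of $P$ along the action on the abelianization is the wrong split: it isolates exactly the subgroup you cannot control.

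The paper avoids this entirely (following Segal--Shalev). Instead of the abelianization, choose $A$ a \emph{maximal abelian $P$-invariant} subgroup of $G$ and set $C=C_P(A)$. Maximality forces $C_G(A)=A$, and the three subgroups lemma in $G\rtimes P$ gives $[C,G,A]=1$, hence $[C,G]\le C_G(A)=A$. Thus $C$ acts trivially both on $A$ and on $G/A$; by Laue's relation the corresponding derivations vanish on $A$, so their ring product is trivial and $C$ embeds into the \emph{additive} group $Der(G/A,A)$, which sits inside a direct sum of at most $k$ copies of $A$, giving $rk(C)\le k^2$ with no appeal to Theorem B at all. Meanwhile $P/C$ embeds as a $p$-subgroup of $Aut(A)$ with $A$ abelian of rank $\le k$, and Proposition D applies to \emph{that} quotient (not to $Aut(G/\gamma_2(G))$), yielding $d(P)\le k^2+\tfrac{5k^2}{4}$ for $p>2$ and $k^2+\tfrac{5k^2-k}{2}$ for $p=2$. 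Your outer estimate and arithmetic are fine, but the missing idea is precisely this choice of a maximal abelian $P$-invariant subgroup, which converts the hard kernel into an abelian group of derivations and makes the quadratic bound immediate.
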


The remaining part of this paper is divided into two sections, Section 2 is devoted to studying the above class of rings and their adjoint groups.  In Section 3 we shows how these rings can be used to investigate the automorphisms of $p$-groups.

\section{\bf {\bf \em{\bf   Adjoint groups of $p$-nil rings.}}}
\vskip 0.4 true cm
Let us recall the definition.
\begin{defn}
Let $R$ be a ring.  We say that $R$ is left $p$-nil if every element $x$ of order $p$ ( $4$ if $p =2$)  in $R^+$ is a left annihilator of $R$, that is $ p x =0$  ($4x=0$ if $p=2$) implies $xy=0$, for all $y\in R$.  We say that $R$ is right $p$-nil if its opposite ring is left $p$-nil.  The ring $R$ is said to be $p$-nil if it is left and right $p$-nil. 
\end{defn}

For instance, for any ring $R$, the subring $S=pR$ ($S=4R$ if $p=2$) is $p$-nil.  Also, it follows easily that the left and the right annihilators of  $\Omega_1(R^+)$ ($\Omega_2(R^+)$ if $p=2$) are respectively right and left $p$-nil.\\
A ring $R$ is said to be nilpotent of class $n$, if  $R^{n+1}=0$ and $n$ is the least non-negative integer satisfying this property, with $R^k$ denotes the additive subgroup generated by all the products of $k$ elements of $R$.    
\begin{thm}
Let $R$ be a ring with an additive group of finite exponent $p^m$.  If $R$ is  left or right $p$-nil, then $R$ is nilpotent of class at most $m$.  In particular the adjoint group $R^\circ$ is nilpotent of class at most $m$.
\end{thm}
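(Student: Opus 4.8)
The plan is to show by induction on $k$ that $R^{p^k} \subseteq \Omega_1(R^+)$ (or $\Omega_2(R^+)$ if $p=2$), so that after $m$ steps $R^{p^m}$ consists of elements of additive order dividing $p$ (resp. $4$); since $R$ is left or right $p$-nil, every such element annihilates $R$ on one side, which forces the product ideal to collapse and yields nilpotency of class at most $m$. Actually it is cleaner to track the power maps directly. First I would treat the right $p$-nil case (the left case being symmetric via the opposite ring). The key elementary observation is that for $x \in R$ with $px = 0$ (resp.\ $4x=0$), right $p$-nilness gives $yx = 0$ for all $y$, hence in particular $x^2 = 0$ and more generally $x$ right-annihilates all of $R$. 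So the heart of the matter is to push powers of $R$ into $\Omega_1(R^+)$.

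The central computation I would carry out is the following: for any $a_1,\dots,a_p \in R$, expand $p\cdot(a_1 a_2 \cdots a_p)$ and compare with products of the form $(p a_{i_1})(\cdots)$. More efficiently, I would use that $R^+/pR^+$ has exponent $p$ together with the identity $R^{p} \equiv (\text{products with at least one factor in } pR) \pmod{\text{deeper terms}}$ — but the genuinely clean route is: since $\exp(R^+) = p^m$, the subgroup $p^{m-1}R^+$ is nonzero only through elements killed by $p$, i.e.\ $p^{m-1}R^+ \subseteq \Omega_{\{1\}}(R^+)$ (resp.\ $p^{m-1}R^+ \subseteq \Omega_{\{2\}}(R^+)$ when $p=2$, taking care of the exponent-$4$ convention; one handles $p=2$ by the same argument with $4$ in place of $p$ throughout). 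By right $p$-nilness, every element of $p^{m-1}R^+$ is a right annihilator of $R$. Now I would show inductively that $R^{k+1} \subseteq p^{?}R^+$; concretely, I claim $R^{j+1} \subseteq \bigl(p^{m-1}R\bigr)\cdot R + \cdots$ is not quite it — instead, the workable filtration is to prove $R \cdot R^{p-1} \subseteq pR$ fails in general, so one must be more careful.

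Here is the filtration I would actually use. Set $A_i = \Omega_{\{i\}}(R^+) = \{x : p^i x = 0\}$ (with $4$-conventions when $p=2$), an ideal, and note $A_m = R$ while $A_0 = 0$, and $A_i/A_{i-1}$ embeds additively into a group of exponent $p$. Right $p$-nilness says $A_1$ is a right annihilator of $R$, so $R A_1 = 0$, equivalently $A_1$ kills $R$ on the... — the correct statement from the definition is $xy = 0$ for the opposite ring, i.e.\ in $R$ itself $x$ of order $p$ satisfies $yx=0$ for all $y$, so $R A_1 = 0$. Then $R A_2 \subseteq A_1$: indeed if $p^2 x = 0$ then $px \in A_1$, so $R(px) = 0$, i.e.\ $p(Rx) = 0$, so $Rx \subseteq A_1$; inductively $R A_i \subseteq A_{i-1}$. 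Iterating, $R^m R = R^m A_m \subseteq R^{m-1}A_{m-1}\subseteq \cdots \subseteq A_0 = 0$, whence $R^{m+1}=0$, i.e.\ $R$ is nilpotent of class at most $m$. The left $p$-nil case is identical after passing to $R^{\mathrm{op}}$. Finally, nilpotency of the ring of class $\le m$ classically implies the adjoint group $R^\circ$ is nilpotent of class $\le m$: on a nilpotent ring the circle operation makes $R$ a group and the lower central series of $(R,\circ)$ is controlled by the powers $R^k$ (one checks $\gamma_{k}(R^\circ) \subseteq R^{k}$ by the usual commutator-in-a-radical-ring estimate). The main obstacle is getting the $p=2$ bookkeeping right, since the definition of $p$-nil uses order $4$ rather than $2$; I expect that one simply replaces $A_i = \{x : 2^{i}x = 0\}$ by $A_i = \{x: 2^{i+1}x = 0\}$ there and the inclusion $RA_i \subseteq A_{i-1}$ goes through verbatim, costing nothing in the class bound because $\exp(R^+)=2^m$ still gives $A_{m-1}=R$ one step early — so I would double-check whether the bound should be $m$ or $m-1$ when $p=2$ and state it accordingly.
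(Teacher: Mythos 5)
Your argument is correct and is essentially the paper's proof in different notation: the inclusion $RA_i\subseteq A_{i-1}$ (with $A_i=\Omega_{\{i\}}(R^+)$) is exactly the paper's inductive claim that $p^{m-n+1}R^n=0$, and iterating it from $A_m=R$ gives $R^{m+1}=0$, with the adjoint-group statement then following from the standard fact (Kruse--Price) that a nilpotent ring of class $m$ has adjoint group of class at most $m$. Your $p=2$ hedge resolves in favour of the stated bound $m$: with the order-$4$ convention the descent still loses at least one index per multiplication by $R$ (in fact it can lose two, which is why the paper's remark improves the bound to about $m/2+1$ for $p=2$), so no correction to $m-1$ is needed.
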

\begin{proof}
Assume that $R$ is left $p$-nil.  We claim that $p^{m-n+1}R^n=0$, for all $n \leq m+1$.  This is obvious for $n=1$.  Now if $x \in R^n$, then by induction $p^{m-n+1}x=0$.  It follows that $p^{m-n}x$ has order $1$ or $p$, therefore $ (p^{m-n}x)y=p^{m-n}(xy)=0$, for all $y\in R$.  This shows that $p^{m-n}R^{n+1}=0$.  Now, for $n=m+1$ we have $R^{m+1}=0$, this prove that $R$ is nilpotent of class at most $m$.  The result follows  for $R$ right $p$-nil by a similar argument.  The second assertion follows from (\cite{KP}, Theorem 1.6.4).  
\end{proof}
\begin{rema}
Note that the bound on the nilpotency class can be improved to ${m/2}+1$ for the even prime.  And the above theorem holds for $p=2$, under the assumption that every element $x \in R$ satisfying $2x=0$ is a left or right annihilator of $R$.
\end{rema}

Since it is obvious that a subring of a left (right) $p$-nil ring is left (right) $p$-nil, it is not clear that this would be true for all the factor rings.  The following lemma shows that this holds for some factors. 
\begin{lem}
If $R$ is a left (right, resp) $p$-nil ring, then the factor ring $R/\Omega_n(R^+)$ is left (right, resp) $p$-nil for all $n \geq 1$.
\end{lem}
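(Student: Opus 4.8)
I want to show that if $R$ is left $p$-nil, then $\bar R := R/\Omega_n(R^+)$ is again left $p$-nil (the right case follows by passing to the opposite ring). The key subtlety is that $\Omega_n(R^+)$ is only an additive subgroup a priori, so the first step is to check it is a genuine (two-sided) ideal of $R$, so that the factor ring makes sense. For that I would invoke the hypothesis directly: if $px = 0$ (or $4x=0$ when $p=2$), then $x$ is a left annihilator, hence $\Omega_{\{1\}}(R^+)\cdot R = 0$ (resp. $\Omega_{\{2\}}(R^+)\cdot R = 0$); so $\Omega_1(R^+)$ (resp. $\Omega_2(R^+)$) is certainly a left ideal, and it is a right ideal too since $\Omega_{\{1\}}(R^+)$ is closed under right multiplication automatically when the additive order is $\leq p$... actually the cleanest route is: for $x$ with $p^k x = 0$, write via the annihilator property that enough multiples of $x$ kill $R$, which shows $\Omega_n(R^+)$ absorbs multiplication; I would spell this out inductively much as in the proof of Theorem 2.2.

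\emph{Main argument.} Once $I := \Omega_n(R^+)$ is known to be an ideal, take $\bar x \in \bar R$ of additive order $p$ (order $4$ if $p=2$), i.e. $p x \in I$ (resp. $4x \in I$) for a representative $x \in R$. I must show $\bar x \bar y = 0$ in $\bar R$ for all $y$, i.e. $xy \in I$ for all $y \in R$; equivalently $(xy)^{p^n}$-type condition, i.e. $p^n(xy) = 0$. From $px \in I = \Omega_n(R^+)$ we get $p^{n+1}x = p^n(px) = 0$, so $x$ has additive order dividing $p^{n+1}$. Then $p^n x$ has order dividing $p$, so by the left $p$-nil hypothesis $(p^n x)y = 0$, that is $p^n(xy) = 0$, which says exactly $xy \in \Omega_{\{n\}}(R^+) \subseteq \Omega_n(R^+) = I$. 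Hence $\bar x \bar y = \overline{xy} = 0$, as required. For $p = 2$ the same computation runs with $4$ in place of $p$: $4x \in \Omega_n(R^+)$ gives $2^{n+2}x = 0$, so $2^n x$ has order dividing $4$, and the hypothesis gives $(2^n x)y = 0$, i.e. $xy \in \Omega_{\{n\}}(R^+)$.

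\emph{Where the difficulty lies.} The computation itself is a two-line exponent chase; the only place demanding care is the bookkeeping that $\Omega_n(R^+)$ is an ideal, since $\Omega_n$ denotes the \emph{subgroup generated by} the elements of order dividing $p^n$, not just that set. Here I would argue that the left $p$-nil condition forces $\Omega_n(R^+)$ to coincide with $\Omega_{\{n\}}(R^+)$ as far as closure under multiplication is concerned: any generator $z$ with $p^n z = 0$ satisfies $(p^{n-1}z)y = 0$ hence $p^{n-1}(zy) = 0$, so $zy \in \Omega_{\{n\}}(R^+)$; and a sum of such elements $zy$, being a sum of elements of order $\leq p^n$, lies in $\Omega_n(R^+)$ — so right multiplication maps $\Omega_n(R^+)$ into itself, and left multiplication is handled symmetrically (or is immediate from the annihilator property once we know the relevant multiples vanish). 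After that, the factor ring is defined and the displayed computation finishes the proof; the right $p$-nil case is obtained by applying the left case to $R^{\mathrm{op}}$, noting $\Omega_n((R^{\mathrm{op}})^+) = \Omega_n(R^+)$.
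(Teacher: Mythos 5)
Your main computation is exactly the paper's proof: from $px\in\Omega_n(R^+)$ deduce $p^{n+1}x=0$, hence $p^nx$ has order dividing $p$ (resp.\ dividing $4$ when $p=2$), apply the left $p$-nil hypothesis to get $p^n(xy)=0$, and conclude $xy\in\Omega_n(R^+)$; this is correct. The only extra content, your worry that $\Omega_n(R^+)$ might fail to be an ideal, is a non-issue: $R^+$ is abelian so $\Omega_n(R^+)=\Omega_{\{n\}}(R^+)$, and $p^n(zy)=(p^nz)y=0=y(p^nz)=p^n(yz)$ shows it is a two-sided ideal in any ring, with no $p$-nil hypothesis needed.
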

\begin{proof}
Assume that $R$ is left $p$-nil.\\ 
Assume first that $p>2$, and  let be $ x \in R $ such that $ px \in \Omega_n(R^+)$.  Then $p^nx \in \Omega_1(R^+)$, and by assumption $ (p^nx)y=p^n(xy)=0 \mbox{, for all } y \in R$.  Therefore $ xy \in \Omega_n(R^+)\mbox{, for all } y \in R$.\\
For $p=2$, if $ x \in R$ such that $ 4x \in \Omega_n(R^+)$, then $2^nx \in \Omega_2(R^+)$, therefore $ (2^nx)y=2^n(xy)=0 $, and so $ xy \in \Omega_n(R^+)\mbox{, for all } y \in R$.\\
 The result follows similarly if $R$ is right $p$-nil.  
\end{proof}

\begin{proof}[Proof of Theorem A]
We denote by $ x^{(k)}$ the $k{th}$ power of $x$ in the adjoint group of $R$.
\\Assume first that $n=1$.  For $p>2$, we have $px= 0$ implies $x^i =0 \mbox{ for } i\geq 2$.  Hence $$ x^{(p)}=\sum_{i\geq 1}\binom{p}{i} x^i =px=0,$$
 and so $x\in \Omega_{\{1\}}(R^\circ)$. Conversely, if $ x^{(p)}=0$ then $$ px= -\sum_{i\geq 2}\binom{p}{i} x^i.$$
 Let $p^m$ be the order of $x$ in $R^+$.  If $m\geq 2$, then $ p^{m-1}x$ has order $p$, hence $ p^{m-1}x^2=0$, and similarly we have $p^{m-2}x^i=0\mbox{, for } i\geq 3$.  Now if we multiply the above equation by $p^{m-2}$, we obtain $$ p^{m-1}x= -\sum_{i\geq 2}\binom{p}{i} p^{m-2} x^i=0$$ 
This contradicts the definition of the order of $x$.  Therefore $m\leq 1$, and so $x \in \Omega_{1}(R^+)$.\\
For $p=2$, $2x= 0$ implies $4x= 0$, thus $x^2=0$.  It follows that $ x^{(2)}=2x+x^2=0$, so $x\in \Omega_{\{1\}}(R^\circ)$.  Conversely, if $ x^{(2)}=0$ then $2x=-x^2$.  Assume that $x$ has order $2^n>2$ in $R^{+}$, then $2^{n-2}x^2=0$, thus $2^{n-1}x= -2^{n-2}x^2=0$, a contradiction.  It follows that $2x=0$.   
\\ Now we proceed by induction on $n$.  If $x \in \Omega_n(R^+)$, then $ px \in \Omega_{n-1}(R^+)$.  This implies that $ x+\Omega_{n-1}(R^+) \in \Omega_1(( R/\Omega_{n-1}(R^+))^+)$.  Lemma 2.3  and the first step imply that $x+\Omega_{n-1}(R^+) \in \Omega_{\{1\}}( (R/\Omega_{n-1}(R^+))^\circ)$.  Hence $ x^{(p)} \in \Omega_{n-1}(R^+) $, and by induction $ x^{(p)} \in \Omega_{\{n-1\}}(R^\circ) $.  Thus  $ x \in \Omega_{\{n\}}(R^\circ) $.  It follows that $\Omega_n(R^+) \subset \Omega_{\{n\}}(R^\circ)$.  The inverse inclusion follows similarly.  
\\Finally, the equality  $\Omega_n(R^\circ)=\Omega_{\{n\}}(R^\circ)$ follows from the fact that $(\Omega_n(R^+))^\circ$ is a subgroup of $R^\circ$ and $\Omega_n(R^\circ)$ is generated by $\Omega_{\{n\}}(R^\circ)$.                                      
\end{proof}

Before proving Theorem B, we need the following lemma.
\begin{lem}
If $R$ is a left $p$-nil $p$-ring, and if $U$ denotes the intersection of the right annihilator of $R$ and $\Omega_{1}(R^{+})$. Then $U$ is a non-trivial ideal and the factor ring $R/U$ is left $p$-nil.
\end{lem}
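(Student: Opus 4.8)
The plan is to verify the three assertions in order: that $U$ is an ideal, that it is non-trivial, and that $R/U$ is left $p$-nil. For the first, observe that $U = \mathrm{r.ann}(R) \cap \Omega_1(R^+)$ is visibly an additive subgroup, being the intersection of two such. To see it is a right ideal, let $x \in U$ and $y \in R$; then $xy = 0$ since $x$ kills $R$ on the right, and $0 \in U$. To see it is a left ideal, let $x \in U$, $y \in R$, and consider $yx$. Since $x \in \Omega_1(R^+)$ we have $px = 0$ (or $4x=0$ if $p=2$); but left $p$-nilness says an element annihilated on the left by $p$ (resp.\ $4$) is a \emph{left} annihilator, so I must instead argue directly: $p(yx) = y(px) = 0$, hence $yx \in \Omega_1(R^+)$ — wait, for $p=2$ I need $4(yx)=0$, which follows from $y(4x)=0$. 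Next, for any $z \in R$, $(yx)z = y(xz) = y\cdot 0 = 0$ because $x \in \mathrm{r.ann}(R)$, so $yx \in \mathrm{r.ann}(R)$ as well; thus $yx \in U$.

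For non-triviality, I would use Theorem 2.1: since $R$ is a $p$-ring (hence of bounded exponent $p^m$ as soon as we know it is, say, finite or at least of bounded exponent — here I should check the intended hypotheses; a $p$-ring in this paper has additive $p$-group structure, and left $p$-nil forces nilpotency by Theorem 2.1 once the exponent is finite) and left $p$-nil, it is nilpotent, say $R^{k+1} = 0 \neq R^k$ for some $k \geq 1$. Pick any nonzero $x \in R^k$. Then $xR \subseteq R^{k+1} = 0$, so $x \in \mathrm{r.ann}(R)$. If $R^k$ has an element of order $p$ (resp.\ $4$) we are essentially done; in general, take $x \in R^k$ nonzero and let $p^j$ be its order — then $p^{j-1}x$ is nonzero of order $p$ and still lies in $R^k \subseteq \mathrm{r.ann}(R)$ (for $p=2$, $2^{j-2}x$ has order dividing $4$ when $j \geq 2$, and order-$2$ elements also lie in $\Omega_2$), hence lies in $\Omega_1(R^+)$ (resp.\ $\Omega_2(R^+)$). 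So $U \neq 0$.

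Finally, to show $R/U$ is left $p$-nil: let $\bar{x} = x + U$ satisfy $p\bar x = 0$ in $(R/U)^+$ (resp.\ $4\bar x = 0$ if $p=2$), i.e.\ $px \in U$ (resp.\ $4x \in U$). I must show $\bar x\,\bar y = 0$ for all $\bar y$, i.e.\ $xy \in U$ for all $y \in R$. Since $px \in U \subseteq \Omega_1(R^+)$, we get $p^2 x = 0$ (resp.\ $16x = 0$); hmm, that alone is not enough. Instead, since $px \in U \subseteq \mathrm{r.ann}(R)$, we have $(px)y = p(xy) = 0$ for all $y$, so $xy \in \Omega_1(R^+)$ (resp., from $(4x)y = 0$, $xy \in \Omega_2(R^+)$). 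Now apply the hypothesis that $R$ itself is left $p$-nil to the element $xy$, which has order dividing $p$ (resp.\ $4$): we conclude $xy$ is a left annihilator, i.e.\ $(xy)z = 0$ for all $z \in R$, so $xy \in \mathrm{r.ann}(R)$. Combining, $xy \in \mathrm{r.ann}(R) \cap \Omega_1(R^+) = U$, as required. The main subtlety — and the step to handle carefully — is the bookkeeping at the prime $2$ (the shift between $\Omega_1$ and $\Omega_2$, and between $p$ and $4$) and making sure the "element of order $p$" extraction in the non-triviality argument stays inside $R^k$; both are routine once one is careful, but they are where an error would most naturally creep in.
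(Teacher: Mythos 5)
Your overall architecture (ideal, non-triviality via nilpotency, then the quotient) matches the paper's, and your non-triviality argument is essentially the paper's own: $0\neq\Omega_1(R^n)\subseteq U$ where $R^n$ is the last nonzero power. The genuine gap is a consistent left/right mix-up. Throughout you treat the right annihilator of $R$ as $\{a\in R: aR=0\}$, but by the paper's own convention (see Definition 2.1: a \emph{left} annihilator is an element $x$ with $xy=0$ for all $y$) the right annihilator is $\{a\in R: Ra=0\}$, and the proof of Theorem B depends on this reading (it is what makes $U$ central in $R^\circ$ for a left $p$-nil ring). Under the correct reading several of your steps do not establish what you claim: ``$xy=0$ since $x$ kills $R$ on the right'' is unjustified ($x\in \mathrm{r.ann}(R)$ gives $yx=0$, not $xy=0$); ``$(px)y=0$ since $px\in \mathrm{r.ann}(R)$'' does not follow from membership in the right annihilator (it is true, but because $px$ has additive order dividing $p$ and $R$ is left $p$-nil); and, most seriously, your final step deduces $(xy)z=0$ for all $z$, which places $xy$ in the \emph{left} annihilator, not in $\{a: Ra=0\}$, so you have not shown $xy\in U$. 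What you actually prove is the (also true) variant of the lemma with the left annihilator, which is a different statement and would not support the application.

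The missing maneuver, which is the heart of the paper's argument for the third assertion, is this: from $px\in \mathrm{r.ann}(R)$ one gets $z(px)=p(zx)=0$ for every $z\in R$, so each $zx$ has additive order dividing $p$ (dividing $4$ if $p=2$); applying left $p$-nilness to $zx$ gives $z(xy)=(zx)y=0$ for all $z$, i.e. $xy\in \mathrm{r.ann}(R)$. Separately, $px\in\Omega_1(R^+)$ makes $px$ itself a left annihilator, whence $p(xy)=(px)y=0$. A smaller but real issue is your $p=2$ bookkeeping: you only obtain $4(xy)=0$, i.e. $xy\in\Omega_2(R^+)$, whereas $U$ is cut out by $\Omega_1(R^+)$; to land back in $U$ you need $2(xy)=(2x)y=0$, which again comes from applying left $2$-nilness to $2x$ (note $4(2x)=8x=0$ since $4x\in\Omega_1(R^+)$).
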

\begin{proof}
As $R$ is nilpotent, let $n$ denote the largest integer such that $R^n \neq 0$.  Then $0 \neq \Omega_{1}(R^n)$ lies in $U$, so $U$ is not trivial.\\
Let be $x, y \in R$ such that $px \in U$ ($4x \in U$, for $p=2$).  Then $z(px)=pzx=0$ ($z(4x)=4zx=0$, for $p=2$)  for all $z \in R$.  As $R$ is left $p$-nil,  it follows that $zxy = 0$,  for all $z \in R$,  hence $xy$ is a right annihilator of R.  Also we have $px \in \Omega_{1}(R^{+})$ ($px \in \Omega_{2}(R^{+})$, for $p=2$), so $pxy=0$, thus $xy \in U$.        
\end{proof}

\begin{proof}[Proof of Theorem B]
Since $U^+ = U^\circ$, we shall denote both of them by $U$.  We claim that $d(H) \leq d(\Omega_{1}(H))$, for any $ H \leq R^\circ$.
Assume that $R$ is left $p$-nil, and assume the result is true for any such ring of order $<|R|$.  Also, we assume that the result holds for any subgroup with order $ <|H|$.
We have $HU/U \cong H/H \cap U$ is a subgroup of $R^\circ/U =(R/U)^\circ$, from our assumption and Lemma 2.4 it follows that $d(H/H \cap U) \leq d(\Omega_{1}(H/H \cap U))$.  Now if $H \cap U \nleq \Phi (G)$, and since $U \leq \Omega_{1}(Z(R^\circ))$, we can find a maximal subgroup $K \leq H$ and a subgroup $P$ of order $p$ in $H \cap U$ such that $H \cong K \times P$.  By the minimality of $H$, we have $d(H) = d(K)+1 \leq d(\Omega_{1}(K))+1 = d(\Omega_{1}(H))$. Otherwise, we have $d(H)=d(H/H \cap U)$. Let $A$ be the subgroup of $H$ such that $A/H \cap U = \Omega_{1}(H/H \cap U)$.  By induction, if $A<H$ then $d(H) \leq d(A/H \cap U) \leq d(A) \leq d(\Omega_{1}(A))$, and since $\Omega_{1}(H)$ is abelian, it follows that $d(H) \leq d(\Omega_{1}(A)) \leq d(\Omega_{1}(H))$. \\
Now we have to assume that $H/H \cap U = \Omega_{1}(H/H \cap U)$ which is abelian.  For $p=2$, we have $H \leq \Omega_{2}(R^\circ)$, so $H$ is abelian and we are done.  For $p>2$, we have $ [H,H] \leq H \cap U$.  Since $H \cap U$ is central, it follows that $H$ is nilpotent of class $\leq 2$, so $H$ is regular (see \cite{H}, II.10 ).  We have $p^{d(H)} \leq |H:H^p| = |\Omega_{1}(H)| = p^{d(\Omega_{1}(H))}$.\\
Finally, since $\Omega_{1}(R^\circ)$ is abelian, it follows that $d(H) \leq d(\Omega_{1}(H)) \leq d(\Omega_{1}(R^\circ)) = d(R^+)$.        
 \end{proof}
 
\begin{proof}[Proof of Corollary B]
First note that the ideal $U=pR$ ($4R$ if $p=2$) is a left $p$-nil $p$-ring, and $(R/U)^\circ \cong R^\circ /U^\circ$ (note that one can take $U$ to be the left or the right annihilator of $\Omega_{1}(R^{+})$, or $\Omega_{2}(R^{+})$ if $p=2$).  Let $H$ be a $p$-subgroup of $R^\circ$.  Then $d(H) \leq d(H/H \cap U^\circ)+d(H \cap U^\circ)$.  Theorem B implies that $d(H \cap U^\circ) \leq d(U^{+}) \leq d(R^{+})$.  On the other hand, $H/H \cap U^\circ \cong HU^\circ/U^\circ$ is a subgroup of $R^\circ /U^\circ \cong (R/U)^\circ$, so $p^{d(H/H \cap U^\circ)} \leq |(R/U)^\circ| \leq |R/U|$.  Now, if $p>2$ then $|R/U|=p^{d(R^{+})}$, and if $p=2$ then $|R/U| = |R/2R||2R/4R| = p^{d(R^{+})}p^{d((2R)^{+})} \leq p^{2d(R^{+})}$, the result follows.
\end{proof}  
 
\section{\bf {\bf \em{\bf   Applications to $p$-group automorphisms.}}}
\vskip 0.4 true cm
Note that we were motivated by the following result in introducing the class of $p$-nil rings. 
\begin{prop}
Let $G$ be a finite $p$-group, and let be $S = Z(G) \cap P (G)$.  Then\\
(a) the ring $Hom(G,S)$ is right $p$-nil;\\
(b) $ \Omega_{n}(Aut_{S}(G))= \Omega_{\{n\}}(Aut_{S}(G))= Aut_{\Omega_{n}(S)}(G) $; \\ 
(c) the exponent of $Aut_{S}(G)$ is $\leq p^{min\{r,s\}}$;\\  
(d) $Aut_{S}(G)$ is nilpotent of class at most $min\{r,s\}$;\\
(e) the rank of $Aut_{S}(G)$ is equal to $d(G)d({S})$.

\end{prop}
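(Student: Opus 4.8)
The plan is to realise $Aut_S(G)$ as the adjoint group of the ring $R:=Hom(G,S)$, to verify that $R$ is right $p$-nil, and then to read off (b)--(e) from Theorems A, B and 2.2 together with two elementary computations about groups of homomorphisms. Since $S=Z(G)\cap P(G)$ lies in $Z(G)$, it is a normal abelian subgroup on which $G$ acts trivially by conjugation; hence a derivation $G\to S$ is exactly a group homomorphism, so $Der(G,S)=Hom(G,S)$ as rings, the operations being pointwise multiplication and composition as in the introduction. Proposition 1.1 then restricts to a group isomorphism $Aut_S(G)\cong Hom(G,S)^{\circ}$, $u\mapsto\delta_u$ with $\delta_u(x)=x^{-1}u(x)$ (the units of the monoid $End_S(G)$ are precisely the bijective maps it contains, and if $u\in Aut_S(G)$ then $u^{-1}\in End_S(G)$, so the group of units of $End_S(G)$ is indeed $Aut_S(G)$).

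For (a) I would argue directly. If $\delta\in Hom(G,S)$ satisfies $p\delta=0$ (resp. $4\delta=0$ when $p=2$), then $\delta(g)^{p}=1$ (resp. $\delta(g)^{4}=1$) for all $g\in G$, so $\delta$ vanishes on $G^{p}$ (resp. $G^{4}$), and it vanishes on $\gamma_2(G)$ because $S$ is abelian; hence $\delta$ vanishes on $P(G)$, and in particular on $S\le P(G)$. Consequently, for every $\eta\in Hom(G,S)$ the product $\eta\cdot\delta$, which sends $x$ to $\delta(\eta(x))$ with $\eta(x)\in S$, is zero; that is, $\delta$ annihilates $R$ on the right, which is exactly the assertion that $R$ is right $p$-nil. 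This is the one place where the hypothesis $S\le P(G)$ is genuinely used, and it is the crux of the proposition.

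With (a) in hand, $R=Hom(G,S)$ is a finite right $p$-nil $p$-ring and the rest is mechanical. For (b): under the isomorphism above an automorphism $u$ lies in $Aut_{\Omega_n(S)}(G)$ iff $\delta_u$ has image in $\Omega_n(S)$, iff $p^n\delta_u=0$, iff $\delta_u\in\Omega_{\{n\}}(R^{+})=\Omega_n(R^{+})$ (the additive group being abelian), and by Theorem A this set equals $\Omega_{\{n\}}(R^{\circ})=\Omega_n(R^{\circ})$; transporting back proves (b). For (c) and (d) I would first observe that the image of any homomorphism $G\to S$ is a quotient of $G/\gamma_2(G)$, hence has exponent dividing $p^{r}$, and also lies in $Z(G)$, hence has exponent dividing $p^{s}$; so every such image lies in $\Omega_{t}(S)$ with $t=\min\{r,s\}$. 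Thus every element of $R^{+}$ has order dividing $p^{t}$, so by Theorem A every element of $R^{\circ}$ does too, giving $exp(Aut_S(G))\le p^{t}$, which is (c) (equivalently $Aut_S(G)=Aut_{\Omega_t(S)}(G)$ via (b)); and the additive exponent of $R$ being at most $p^{t}$, Theorem 2.2 gives that $R^{\circ}\cong Aut_S(G)$ is nilpotent of class at most $t$, which is (d).

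For (e), Theorem B applied to $R$ gives $rk(Aut_S(G))=rk(R^{\circ})=d(R^{+})$, so it remains to compute $d(Hom(G,S)^{+})$. Since homomorphisms $G\to S$ factor through $G/\gamma_2(G)$, I would decompose $G/\gamma_2(G)$ and $S$ into direct sums of $d(G)$ and $d(S)$ nontrivial cyclic $p$-groups respectively and use $Hom(C_{p^{a}},C_{p^{b}})\cong C_{p^{\min\{a,b\}}}$, which is nontrivial since $a,b\ge 1$; then $Hom(G,S)^{+}$ is a direct sum of $d(G)d(S)$ nontrivial cyclic $p$-groups, whence $d(Hom(G,S)^{+})=d(G)d(S)$ and (e) follows. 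I do not expect a serious obstacle beyond keeping the three identifications ($Aut_S(G)\leftrightarrow Hom(G,S)^{\circ}$, $Aut_{\Omega_n(S)}(G)\leftrightarrow\Omega_n(R^{+})$, and the decomposition of $Hom(G,S)^{+}$) straight; the substantive ingredient is the right $p$-nilpotence established in part (a).
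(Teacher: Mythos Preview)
Your proposal is correct and follows essentially the same route as the paper: you prove (a) by showing that a homomorphism of additive order $p$ (or $4$) kills $P(G)\supseteq S$ and is therefore a right annihilator, and then deduce (b)--(e) from Theorems~A, B and~2.2 after observing that $Hom(G,S)^{+}\cong Hom(G/G',S)^{+}$ has exponent $\le p^{\min\{r,s\}}$ and minimal number of generators $d(G)d(S)$. The only difference is cosmetic: you spell out the identifications $Der(G,S)=Hom(G,S)$, $Aut_{\Omega_n(S)}(G)\leftrightarrow\Omega_n(R^{+})$, and the cyclic decomposition yielding $d(R^{+})=d(G)d(S)$, whereas the paper states these in a single sentence.
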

\begin{proof}
(a) Let  be $k,h \in Hom(G,S)$ such that $ph=0$ ($4h=0$ if $p=2$).  Hence $Im(h)$ is an abelian group of exponent $p$ ($4$ if $p=2$), so its kernel contains $P(G)$, and since $S \leq P(G)$ we have $Im(k) \subset ker(h) $.  It follows that $h$ is a right annihilator of the ring $Hom(G,S)$. \\ 
 Observe that the additive group $Hom(G,S)=Hom(G/G', S)$ has exponent $ \leq p^{min\{r,s\}}$ and rank $d(G)d(S)$, now (b) and (c) follow from Theorem A, (d) follows from Theorem 2.2, and (e) follows from Theorem B.\\

\end{proof}
Note that Proposition 3.1 provides a new proof of Theorem 4.8  in \cite{JJ}.
\begin{cor}
If $G$ is a finite $p$-group such that $Z(G) \leq \Phi (G)$, then $Aut_{Z(G)}(G)$ is nilpotent of class at most $min\{r,s\}$, where $exp(G/G\prime)=p^r$ and $exp(Z(G))=p^s$. 
\end{cor}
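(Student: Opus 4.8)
The plan is to realise $Aut_{Z(G)}(G)$ as the adjoint group of a ring of homomorphisms and then apply Theorem 2.2, in the slightly weakened form for $p=2$ recorded in the Remark following it. Since $Z(G)$ is central, conjugation acts trivially on it, so a map $\delta\colon G\to Z(G)$ is a derivation precisely when it is a group homomorphism; as $Z(G)$ is abelian this gives $Der(G,Z(G))=Hom(G,Z(G))=Hom(G/G',Z(G))$. By Proposition 1.1, $Aut_{Z(G)}(G)\cong R^\circ$, where $R=Hom(G,Z(G))$. Its additive group $R^+=Hom(G/G',Z(G))$ has exponent dividing $\gcd(\exp(G/G'),\exp(Z(G)))=p^{\min\{r,s\}}$; set $p^m=\exp(R^+)$, so that $m\le\min\{r,s\}$.

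The key step is the annihilator estimate for $R$. Take $h\in R$ with $ph=0$ (with $2h=0$ if $p=2$). Then $Im(h)$ is elementary abelian, so $\ker h\supseteq G'G^p=\Phi(G)$, whence $Z(G)\le\Phi(G)\le\ker h$ by hypothesis. Since $Im(k)\subseteq Z(G)$ for every $k\in R$ and the product in $R$ is $(kh)(x)=h(k(x))$, we obtain $kh=0$ for all $k$, i.e. $h$ is a right annihilator of $R$. For $p>2$ this is exactly the statement that $R$ is right $p$-nil in the sense of Definition 2.1; for $p=2$ it says that every element killed by $2$ is a right annihilator, which is the hypothesis of the Remark after Theorem 2.2. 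In either case Theorem 2.2 (via that Remark when $p=2$) gives that $R$, and hence $R^\circ\cong Aut_{Z(G)}(G)$, is nilpotent of class at most $m\le\min\{r,s\}$, which is the assertion.

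I expect the only real subtlety to be the prime $2$, where $\Phi(G)=G'G^2$ properly contains $P(G)=G'G^4$ in general, so $R$ need not be right $2$-nil in the strict sense and one cannot simply appeal to Proposition 3.1(d) with $S=Z(G)$ (indeed $S=Z(G)\cap P(G)$ may be a proper subgroup of $Z(G)$). What rescues the argument is that annihilating $2$ rather than $4$ already pushes $Im(h)$ into $G'G^2\supseteq Z(G)$, which is precisely the weakened hypothesis covered by the Remark. For odd $p$ the whole thing is immediate: $Z(G)\le\Phi(G)=P(G)$ forces $S=Z(G)$, so $Aut_{Z(G)}(G)=Aut_S(G)$ and Proposition 3.1(d) applies directly.
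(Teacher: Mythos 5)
Your proof is correct, and for odd $p$ it coincides with what the paper intends: there $P(G)=\gamma_2(G)G^p=\Phi(G)$, so the hypothesis $Z(G)\le\Phi(G)$ gives $S=Z(G)\cap P(G)=Z(G)$ and the corollary is literally Proposition 3.1(d). Where you genuinely diverge is at $p=2$, and rightly so: the paper presents the corollary as an immediate consequence of Proposition 3.1, but for $p=2$ one has $P(G)=\gamma_2(G)G^4$, which may be properly contained in $\Phi(G)=\gamma_2(G)G^2$, so $S=Z(G)\cap P(G)$ can be a proper subgroup of $Z(G)$ and Proposition 3.1(d) only bounds the class of the subgroup $Aut_S(G)$, not of $Aut_{Z(G)}(G)$. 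Your repair --- working directly with $R=Hom(G,Z(G))$, noting that $2h=0$ already forces $\ker h\supseteq \gamma_2(G)G^2=\Phi(G)\supseteq Z(G)\supseteq Im(k)$ so that every $2$-torsion element of $R$ is a right annihilator, and then invoking the Remark after Theorem 2.2 in place of strict right $2$-nilness --- is exactly the right move, and the induction in the proof of Theorem 2.2 does go through under that weakened hypothesis. The exponent bound $\exp(R^+)\le p^{\min\{r,s\}}$ and the identification $Aut_{Z(G)}(G)\cong R^\circ$ via Proposition 1.1 are both fine. In short, your argument is not just correct but fills a small gap in the paper's own derivation of this corollary at the prime $2$.
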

It is interesting that Proposition 3.1 has a strong implication on the structure of $Aut(G)$, where $G$ is an abelian $p$-group.
For instance if $p>2$, we are speaking about the ring $Hom(G,G^p)$ and about the automorphism group $Aut_{G^p}(G)$.
The quotient $Aut(G)/Aut_{G^p}(G)$ can be embedded as a subgroup of $GL(d,p)$, with $d=d(G)$.  Thus $Aut_{G^p}(G)$ has index at most $p^{\binom{d}{2}}$ in a $p$-sylow of $Aut(G)$, that is a $p$-sylow of $Aut(G)$ contains a large normal subgroup having a very regular structure.  
\begin{proof}[Proof of Proposition D]
We have $K=Aut_{P(G)}(G)$ is the adjoint group of the ring $Hom(G,P(G))=Hom(G,S(G))$.  Let $P$ be a $p$-subgroup of $Aut(G)$.  We have $d(P) \leq d(P/P \cap K) + d(P \cap K)$, since $P \cap K$ is a subgroup of $K$ it follows from Proposition 3.1 (e) that $d(P \cap K) \leq dd'$.\\
For $p$ odd, $P/P \cap K \cong PK/K$ is a $p$-subgroup of $GL(d,p)$, and since every $p$-subgroup of $GL(d,p)$ can be generated by $d^2/4$ (see \cite{Pat}), it follows that $d(P/P \cap K) \leq d^2/4$.  Therefore $d(P) \leq dd'+d^2/4$.\\
For $p=2$, $P/P \cap K$ can be embedded as a $2$-subgroup of $Aut(G/G^4)$,  so we have only to show that the $2$-part of $|Aut(A)|$ is at most $2^{\frac{3d^2-d}{2}}$,  for any abelian group $A$ of rank $d$ and exponent $4$.  Indeed, $Aut(A)/Aut_{A^2}(A)$ is a subgroup of $GL(2,d)$, so the order of one of its $2$-sylow is at most $2^{\frac{d^2-d}{2}}$.  On the other hand $Aut_{A^2}(A)$ is isomorphic to the adjoint group of $Hom(A,A^2)$, which has order $2^{d^2}$, the result follows.    
\end{proof}
\begin{proof}[Proof of Corollary D]
The idea of this proof is taken from (\cite{SS}, Lemma 2.1).\\
Let be $P$ a $p$-subgroup of $Aut(G)$, $A$ a maximal abelian $P$-invariant subgroup of $G$, and $C=C_P(A)$.  It follows from the three subgroup lemma applied in $G \rtimes P$, that $[C,G,A]=1$, thus $[C,G] \leq C_G(A)$.  It follows easily from the maximality of $A$ that $C_G(A)=A$, thus $C$ acts trivially on $A$ and $G/A$, so by Laue's relation it can be embedded in the additive group $Der(G/A,A)$, which embeds in a direct sum of $k$ copies of $A$.  Thus $rk(C) \leq k^2$.  Now, as $P/C$ embeds as a $p$-subgroup of $Aut(A)$, the result follows at once from Proposition D.     
\end{proof}
The remainder of the paper is devoted to prove Theorem C.

\begin{lem}
Given a finite $p$-group $G$. Then $Z(Aut_{P(G)}(G))$ has exponent at most $p^{min\{r,s\}}$.  
\end{lem}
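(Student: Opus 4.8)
The plan is to compare $Aut_{P(G)}(G)$ with the a priori much smaller group $Aut_{S(G)}(G)$, where $S(G)=Z(G)\cap P(G)$, and to show that the \emph{center} of the former already lies inside the latter. The bound then drops out of Proposition 3.1(c), which gives $exp\,(Aut_{S(G)}(G))\le p^{\min\{r,s\}}$.

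The first step is the elementary observation that $Inn(G)\le Aut_{P(G)}(G)$. Indeed, for $g\in G$ the inner automorphism $\iota_g\colon x\mapsto x^{g}$ satisfies $x^{-1}\iota_g(x)=[x,g]\in\gamma_2(G)\subseteq P(G)$, the last inclusion being immediate from the definition $P(G)=\gamma_2(G)G^{p}$ (or $\gamma_2(G)G^{4}$ when $p=2$). Consequently every $\psi\in Z(Aut_{P(G)}(G))$ commutes with every inner automorphism of $G$.

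The second step unwinds this commutation. From $\psi\circ\iota_g=\iota_g\circ\psi$ and the fact that $\psi$ is a homomorphism one gets $\psi(x)^{\psi(g)}=\psi(x)^{g}$ for all $x,g\in G$; since $\psi$ is onto, $\psi(x)$ ranges over all of $G$, so $y^{\,\psi(g)g^{-1}}=y$ for every $y\in G$, i.e. $\psi(g)g^{-1}\in Z(G)$, and hence $g^{-1}\psi(g)\in Z(G)$ for every $g$. On the other hand $\psi\in Aut_{P(G)}(G)$ means $g^{-1}\psi(g)\in P(G)$. Combining the two, $g^{-1}\psi(g)\in Z(G)\cap P(G)=S(G)$ for all $g\in G$, that is, $\psi\in Aut_{S(G)}(G)$. (Equivalently: commuting with $Inn(G)$ forces the derivation $\delta_\psi(g)=g^{-1}\psi(g)$ to take values in $Z(G)$.)

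Finally, since $S(G)$ is central in $G$, Proposition 1.1 identifies $Aut_{S(G)}(G)$ with the adjoint group of $Hom(G,S(G))$, so Proposition 3.1(c) gives $exp\,(Aut_{S(G)}(G))\le p^{\min\{r,s\}}$; by the containment just proved, $exp\,(Z(Aut_{P(G)}(G)))\le p^{\min\{r,s\}}$ as claimed. There is no real obstacle in this argument: the only points deserving a moment's care are the use of surjectivity of $\psi$ in the second step and the fact — true by the very definition of $P(G)$ — that $\gamma_2(G)\le P(G)$, which is what puts $Inn(G)$ inside $Aut_{P(G)}(G)$ in the first place.
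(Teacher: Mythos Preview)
Your proof is correct and follows essentially the same route as the paper: show $Inn(G)\le Aut_{P(G)}(G)$, deduce that any central $\psi$ commutes with all inner automorphisms and hence has $g^{-1}\psi(g)\in Z(G)\cap P(G)=S(G)$, then invoke Proposition~3.1(c). The only difference is that you spell out the computation behind ``commutes with $Inn(G)$ $\Rightarrow$ $g^{-1}\psi(g)\in Z(G)$'' in detail, whereas the paper states this implication without justification.
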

\begin{proof}
Let be $u \in Z(Aut_{P(G)}(G))$.  Since $Aut_{P(G)}(G)$ contains $Inn(G)$, we have $u$ commutes with $Inn(G)$, thus $x^{-1} u(x) \in Z(G)$ and so $x^{-1} u(x) \in Z(G)\cap P (G) $, for all $x \in G$.  It follows from Proposition 3.1 (c) that the order of $u$ is at most $p^{min\{r,s\}}$.
\end{proof}
Note that one can replace $Z(Aut_{P(G)}(G))$ in the above lemma by $Z(P)$, where $P$ is $p$-sylow of $Aut(G)$, for (at least) $p>2$.  Indeed, we claim that if $ u \in Z(P)$ then $x^{-1} u(x) \in \Phi(G)$, for all $x \in G$.  We have $u$ is a $p$-automorphism that acts on the $p$-group $\Omega_{1}(Z(G)\cap \Phi (G))$, so it fixes at least a non-trivial element $z$ in this group.  Let $M$ be a maximal subgroup of $G$, and let $r:G \rightarrow \mathbb{Z}_p$ be a homomorphism with kernel $M$.  Consider the endomorphism $h(x) = z^{r(x)}$, for $x \in G$.  Then $1+h :x \mapsto xh(x)$  is an automorphism of $G$ lying in $Aut_{\Phi(G)}(G)$, so it lies in $P$.  It follows that $u$ commutes with $h$, thus $z^{r(x)} = z^{r(u(x))}$, so $x^{-1} u(x) \in M$, for all $x \in G$.  This is true for any maximal subgroup $M$, and the claim follows.\\

The following lemma may be seen as a generalization of Theorem 3 in \cite{Lieb}, our proof is somewhat different.  Below, for a finite $p$-group $G$, $e(G)$ denotes the integer satisfying $p^{e(G)}=exp(G)$.  
\begin{lem}
Let $G$ be a finite $p$-group of class $c$, let be $r_1(G)= \sum_{i=1}^{c} e(\gamma_i/\gamma_{i+1}) $ and $s_1(G)= \sum_{i=1}^{c} e(Z_i/Z_{i-1})$.  Then $Aut_{\Phi(G)}(G)$ is nilpotent of class at most $min\{r_1,s_1\}-1$.  In particular its class does not exceed $tc-1$.
\end{lem}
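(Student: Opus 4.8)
The plan is to filter $K := Aut_{\Phi(G)}(G)$ by the descending chain of normal subgroups $K_i := Aut_{\gamma_{i+1}(G)}(G)$ (so $K_1 = K$), and to show that consecutive quotients $K_i/K_{i+1}$ behave additively, giving a bound on the nilpotency class of $K$ in terms of how many such quotients are non-trivial. First I would set up the identification from Proposition 1.1: via Laue's relation, $Aut_{\gamma_{i+1}}(G)$ corresponds to a subgroup of the adjoint monoid of $Der(G,\gamma_{i+1})$ — here one has to be slightly careful because $\gamma_{i+1}$ need not be abelian, so I would instead argue directly with the derivations $\delta_u(x) = x^{-1}u(x)$, using the standard commutator identities: if $\delta_u$ has image in $\gamma_{i+1}$ and $\delta_v$ has image in $\gamma_{j+1}$, then the derivation attached to the commutator $[u,v]$ in $Aut(G)$ has image in $\gamma_{i+j+1}$. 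This is the key "additivity" step, and it shows $[K_i, K_j] \le K_{i+j}$; in particular the lower central series of $K$ descends at least as fast as $\gamma_i(K) \le K_i$.

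Next I would count: since $G$ has class $c$, we have $\gamma_{c+1} = 1$, hence $K_c = 1$, which already gives class at most $c-1$ — but that is far weaker than claimed, so the refinement must come from the exponents. The real input is that each quotient $K_i / K_{i+1}$ embeds (via $u \mapsto \delta_u \bmod \gamma_{i+2}$) into $Hom(G/\gamma_2, \gamma_{i+1}/\gamma_{i+2})$, which is an abelian group of exponent dividing $p^{\min\{e(G/\gamma_2),\, e(\gamma_{i+1}/\gamma_{i+2})\}}$. Summing the contributions, an automorphism $u \in K$ has $p$-power order bounded by $p^{r_1(G)}$ where $r_1(G) = \sum_{i=1}^c e(\gamma_i/\gamma_{i+1})$: one climbs down the filtration, at each level killing the image in the relevant $Hom$-group, and the exponents add. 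The dual bound $p^{s_1(G)}$ comes from the symmetric argument run against the upper central series $Z_i(G)$, using that $u \in K$ fixes $G/\Phi(G)$ hence each factor $Z_i/Z_{i-1}$ is moved into the next one up. So $exp(K) \le p^{\min\{r_1,s_1\}}$.

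The final step is to convert the exponent bound into the class bound. Here I would invoke the contrast with Theorem C's machinery together with the observation that $K$ is generated by elements whose attached derivations factor through $G/\Phi(G)$: combining $[K_i,K_j]\le K_{i+j}$ with the fact that $K$ acting on $G/\Phi(G)$ trivially forces $[K,G]\le\Phi(G)$ and iterating, one gets that $\gamma_{k}(K)$ consists of automorphisms whose derivation has image in $\gamma_{k+1}(G) \cap (\text{deep terms of the }\Phi\text{-series})$. More precisely the clean route is: the subgroup $K$ stabilizes the chain $G \ge \Phi(G) \ge \Phi(G)\cap\gamma_2 \ge \cdots$, and a group stabilizing a chain of length $\ell$ is nilpotent of class $< \ell$ (Kaloujnine/P. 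Hall); the point is to choose the chain so that its length is exactly $\min\{r_1,s_1\}$, using the refinement of the lower (resp. upper) central series by powers. Then the "in particular" clause follows because each $e(\gamma_i/\gamma_{i+1}) \le r \le t$ is false in general, so instead one uses $e(\gamma_i/\gamma_{i+1}) \le e(G/\gamma_2) = r$ only for $i=1$; the correct bound $r_1 \le tc$ comes from $e(\gamma_i/\gamma_{i+1}) \le e(Z(\gamma_i/\gamma_{i+2})) \le s$ giving $e(\gamma_i/\gamma_{i+1})\le \min\{r,s\}=t$ for all $i$ (a short separate argument), whence $r_1 \le tc$ and likewise $s_1 \le tc$. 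The main obstacle I anticipate is the non-abelianness of the $\gamma_{i+1}$ and making the "derivation commutator lands in $\gamma_{i+j+1}$" claim fully rigorous, i.e. checking that $\delta_{[u,v]}$ really has image in the deeper term — this requires an honest computation with the Hall–Witt identity rather than a one-line appeal to ring multiplication, since Proposition 1.1 only applies verbatim when the target is abelian and normal.
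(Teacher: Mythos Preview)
Your ``clean route'' via Kaloujnine's stability theorem is indeed the right one, but you never establish the crucial hypothesis: that $K = Aut_{\Phi(G)}(G)$ actually stabilises the chain you build by refining the lower (or upper) central series by $p$-th powers. Acting trivially on $G/\Phi(G)$ does not obviously force trivial action on each piece $\gamma_i^{p^j}\gamma_{i+1}/\gamma_i^{p^{j+1}}\gamma_{i+1}$, and your commutator filtration $K_i = Aut_{\gamma_{i+1}}(G)$ does not help either, since $K_1 = Aut_{\gamma_2}(G)$ is in general a \emph{proper} subgroup of $K$ (recall $\Phi(G) = \gamma_2 G^p \supsetneq \gamma_2$), so the filtration does not even start at $K$. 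The paper sidesteps this completely: it works with the lower $p$-central series $P_i(G)$ and invokes the fact (Huppert--Blackburn) that trivial action on $G/P_2(G) = G/\Phi(G)$ forces trivial action on every factor $P_i/P_{i+1}$. Kaloujnine then gives class $\le n-1$ where $n$ is the $p$-class of $G$, and the refined central series enter only afterwards, to bound $n$ from above via the minimality of the lower $p$-central series among central series with elementary-abelian factors. Your exponent computation in the second paragraph is a detour that never connects back to a class bound.

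Your ``in particular'' argument is also off. The inequality $e(\gamma_i/\gamma_{i+1}) \le e(G/\gamma_2) = r$ \emph{is} valid for every $i \ge 1$ (commutation with a fixed element of $\gamma_{i-1}$ gives a homomorphism $G/\gamma_2 \to \gamma_i/\gamma_{i+1}$, and such images generate), and dually $e(Z_i/Z_{i-1}) \le e(Z_1) = s$. These give $r_1 \le rc$ and $s_1 \le sc$, hence $\min\{r_1,s_1\} \le \min\{r,s\}\,c = tc$ immediately. Your claim that $e(\gamma_i/\gamma_{i+1}) \le r$ ``is false in general'' is itself incorrect, and the alternative route via $Z(\gamma_i/\gamma_{i+2})$ is both unnecessary and unjustified.
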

Recall first that the lower $p$-central series of a group $G$ is defined by $P_1(G) = G$ and by induction $P_{i+1}(G) = P_i(G)^p[P_i(G),G]$, $i \ge 1$.  And note that $P_2(G)=\Phi(G)$.  The least integer $n$ such that $P_{n+1}(G)=1$ is the $p$-lower length of $G$, and any central series of $G$ having factors of exponent $p$, has length at least $n$.   
\begin{proof}
By (\cite{HB}, Theorem VIII.1.7), if an automorphism $u$ of $G$ acts trivially on $G/P_2(G)$, then it acts trivially on each section $P_{i+1}(G)/P_i(G) $.  Thus $Aut_{\Phi(G)}(G)$ is a stability group of the lower $p$-series.  It follows from a well known result of Kaloujnine (see \cite{H}, Satz III.2.9), that $Aut_{\Phi(G)}(G)$ is nilpotent of class at most $n-1$.  \\
Now we have to connect the upper and the lower central series of $G$ to the above series.  Define the $p$-series of an abelian $p$-group $A$ of exponent $p^m$ by 
$$1 < A^{p^{m-1}} < ...<A^p<A$$ 
This series has length $m$, and factors of exponent $p$.  Using this definition one can refine each factor of the lower and the upper central series, by its $p$-series.  We obtain two central series of $G$ having factors of exponent $p$ and their length are respectively equal to $r_1(G)= \sum_{i=1}^{c} e(\gamma_i/\gamma_{i+1}) $ and $s_1(G)= \sum_{i=1}^{c} e(Z_i/Z_{i-1})$.  As $n \leq min\{r_1,s_1\}$, it follows that $Aut_{\Phi(G)}(G)$ is nilpotent of class at most $min\{r_1,s_1\}-1$.\\  
Finally, by a well known result $exp(Z_i/Z_{i-1}) \leq exp(Z(G))$ and $exp(\gamma_i/\gamma_{i+1}) \leq exp(G/\gamma_{2}) $, it follows that $r_1(G) \leq rc$ and $s_1(G) \leq sc$.  Therefore $Aut_{\Phi(G)}(G)$ is nilpotent of class at most $min\{r,s\}c-1 = tc-1$.    
\end{proof}

\begin{proof}[Proof of Theorem C]
By Lemma 3.3, the exponent of the center of $Aut_{P(G)}(G)$ is $\leq p^t$.  It follows from  Lemma 3.4 that the exponent of  $Aut_{P(G)}(G)$ is at most $(p^t)^{tc-1} = p^{t^2c-t}$.\\
Now let $P$ be a $p$-sylow in $Aut(G)$.  As $P/Aut_{P(G)}(G)$ embeds as $p$-subgroup of $Aut(G/P(G))$, it follows from a result of Horosevskii (see \cite{Isa}, Corollary 3.3) that the exponent of $P/Aut_{P(G)}(G)$ is bounded by $p^{d-1}$ if $p>2$ and by $p^{2d-1}$ if $p=2$.  The result follows.
  
\end{proof}

\vskip 0.4 true cm



\bigskip
\bigskip


{\footnotesize \pn{\bf Yassine Guerboussa}\; \\ {Department of
Mathematics}, {University Kasdi Merbah Ouargla,} {Ouargla, Algeria}\\
{\tt Email yassine\_guer@hotmail.fr}\\
{\footnotesize \pn{\bf Bounabi Daoud}\; \\ {Department of
Mathematics}, {University 
Ferhat Abas,} {Setif, Ouargla}\\
{\tt Email: boun\_daoud@yahoo.com}\\


\begin{thebibliography}{20}
\bibitem{Baer}
R. Baer  and  H. Heineken, Radical  groups  of finite  abelian  subgroup rank, {\em Illinois J. Math.} {\bf 16} (1972), 533-580.
\bibitem{Car}
A. Caranti and S. Mattarei, Automorphisms of $p$-groups of maximal class,  {\em Rend. Sem. Mat. Univ. Padova}  {\bf 115} (2006), 189 -198. 
\bibitem{CM}
F. Catino and M. M. Miccoli,  A note on IA-automorphisms of two-generated metabelian groups, {\em Rend. Sem. Mat. Univ. Padova}  {\bf 96} (1996), 99 -104.
\bibitem{Dick}
O. Dickenschied, On the adjoint group of some radical rings, 
 {\em Glasgow Math. J.} {\bf 39} (1997), 35-41.
\bibitem{H}
B. Huppert. Endliche Gruppen. I. {\em Die Grundlehren der Mathematischen Wissenschaften},
Band 134. Springer-Verlag, Berlin, 1967.
\bibitem{HB}
B. Huppert and N. Blackburn, {\em Finite Groups II}, Springer-Verlag, Berlin, (1982).
\bibitem{Isa}
I. M. Isaacs, {\em Finite Group Theory}, Graduate studies in mathematics; v. {\bf 92}, (2008)
\bibitem{JJ}
M.H. Jafari and A.R. Jamali, On the nilpotency and solubility of the central automorphism group of finite group, 
 {\em  Algebra Coll.} {\bf 15}:3 (2006), 485-492.


\bibitem{KP}
R.L. Kruse and D.T. Price,  {\em Nilpotent Rings},
Gordon and Breach, New York, (2010).
\bibitem{Lau}
H. Laue, On group automorphisms which centralize the factor group by an abelian normal subgroup, {\em  J. Algebra.} {\bf 96}
(1985), 532-547.
\bibitem{Lieb}
H. Liebeck, The automorphism group of finite $p$-groups, {\em  J. Algebra.} {\bf 4} (1966), 426-432 (1966).
\bibitem{Pat}
A. R. Patterson, The minimal number of generators for $p$-subgroups of $GL(n,p)$, {\em J. Algebra} {\bf 32} (1974), 132–140. 
\bibitem{Sho}
K. Shoda, \"{U}ber die Automorphismengruppe einer endlichen Abelschen Gruppe, {\em  Math. Ann}. {\bf 100} (1928), 674-686.
\bibitem{SS}
D.  Segal  and  A.  Shalev,  Profinite  groups  with  polynomial  subgroup  growth,  {\em J. 
London Math}.  Soc.(2) {\bf 55}  (1997), 320-334.


 


  


\end{thebibliography}
\end{document}